\newcommand\comment[1]{}
\newcommand\classification[2][]{%
  \gdef\@classification{%
    \href{http://www.ams.org/msc/}%
{\textit{2000 Mathematics Subject Classification}} \ignorespaces#2\unskip}}
\def\GG{{\mathbb G}}
\def\NN{{\mathbb N}}
\def\ZZ{{\mathbb Z}}
\def\QQ{{\mathbb Q}}
\def\G{{\mathcal G}}
\def\H{{\mathcal H}}
\def\fH{{\mathfrak H}}
\def\fM{{\mathfrak M}}
\def\tF{\tilde{F}}
\def\tR{\tilde{R}}
\def\id{{\rm id}}
\def\ID{{\rm ID}}
\def\cat#1{{\sf #1}}
\def\vect#1{\text{\boldmath $#1$\unboldmath}} 
\def\ie{i.\,e.}
\def\isom{\cong}
\def\congr{\equiv}
\def\restr#1{|_{#1}}
\def\ideal{\unlhd}
\DeclareMathOperator{\Ker}{Ker}
\DeclareMathOperator{\Hom}{Hom}
\DeclareMathOperator{\Aut}{Aut}
\DeclareMathOperator{\GL}{GL}
\DeclareMathOperator{\Gal}{Gal}
\DeclareMathOperator{\Quot}{Quot}
\DeclareMathOperator{\Spec}{Spec}
\def\uGal{\underline{\Gal}}
\def\markdef{\bf }
\theoremstyle{plain}
\newtheorem{thm}{Theorem}[section]
\newtheorem{cor}[thm]{Corollary}
\newtheorem{prop}[thm]{Proposition}
\newtheorem{conj}[thm]{Conjecture}
\theoremstyle{definition}
\newtheorem{defn}[thm]{Definition}
\newtheorem{rem}[thm]{Remark}
\newtheorem{exam}{Example}
\newenvironment{notation}{{\bf Notation}\it}
\begin{document}

\title[Infinitesimal Galois groups]{Infinitesimal group
  schemes as iterative differential Galois groups}
\author{Andreas Maurischat}
\address{\rm {\bf Andreas Maurischat (n\'e R\"oscheisen)}, Interdisciplinary Center for
  Scientific Computing, Heidelberg 
University, Im Neuenheimer Feld 368, 69120 Heidelberg, Germany }
\email{\sf andreas.maurischat@iwr.uni-heidelberg.de}

\classification{}

\keywords{Galois theory, Differential Galois theory, inseparable
  extensions, infinitesimal group schemes}

\begin{abstract}
This article is concerned with Galois theory for iterative
differential fields (ID-fields) in positive characteristic. More
precisely, we consider purely inseparable 
Picard-Vessiot extensions, because these are the ones having an
infinitesimal group scheme as iterative differential Galois group. In
this article we prove a necessary and sufficient 
condition to decide whether an infinitesimal group scheme occurs as
Galois group scheme of a Picard-Vessiot extension over a given
ID-field or not. In particular, this solves the inverse ID-Galois
problem for infinitesimal group schemes. 
\end{abstract}

\maketitle

\section{Introduction}

In recent days, Picard-Vessiot theory for differential equations in
characteristic zero and for iterative differential equations in
positive characteristic has been extended to the case of non
algebraically closed fields of constants (cf. \cite{dyckerhoff}
resp. \cite{maurischat}). In the classical setting the Galois group of
a PV-extension is given by the points of a linear algebraic group
over the constants. In characteristic zero, one then has a Galois
correspondence between all intermediate differential fields and the
Zariski closed subgroups of the Galois group. In positive
characteristic this correspondence was restricted to intermediate
iterative differential fields over which the PV-field is
separable. This restriction in positive characteristic and similar
problems in the case of a non algebraically closed field of constants
have been removed in \cite{dyckerhoff} resp. \cite{maurischat} by
regarding the Galois group as a group scheme and not as the group of
rational points. Every intermediate (iterative) differential field is
then obtained as the field of invariants of some closed subgroup
scheme. For example an intermediate ID-field over which the PV-field
is inseparable is the field of invariants of a nonreduced subgroup
scheme. In general, a PV-extension $E/F$ can be inseparable itself and
in this case the fixed field of $E$ under 
the full group of iterative differential automorphisms of $E$ over $F$
is strictly bigger than $F$. Since classically one assumes equality,
the more general extensions are called pseudo Picard-Vessiot
extensions (PPV-extensions) here.

In this article, we treat questions concerning purely inseparable
PPV-exten\-sions. This is done in the setting of
fields with a {\em multivariate} iterative derivation and having a
perfect field of constants. (Although some of the minor results hold without
the assumption of perfectness.)
We first show
that a PPV-extension is purely
inseparable if and only if its Galois group scheme is an infinitesimal
group scheme and that the exponent of the extension and the height of
the group scheme are equal (cf. Cor. \ref{infinitesimal_group}). The
main result is a necessary and sufficient condition to decide whether
an infinitesimal group scheme occurs as Galois group of a
PPV-extension over a given ID-field or not
(cf. Thm. \ref{general-realisation} and
Cor. \ref{special-realisation}). 

In Section \ref{basics}, we introduce the reader to the basic notation of
multivariate iterative differential rings and PPV-extensions. Some
properties, general results on PPV-extensions and the Galois
correspondence are given in Section \ref{galois-theory} and can also
be found in \cite{maurischat} 
(see also \cite{heiderich}). Section \ref{purely-insep} is 
dedicated to purely inseparable PPV-extensions and the corresponding
infinitesimal group schemes. In the last section, we give some
examples to illustrate the previous results.

\medskip

{\bf Acknowledgements:} I would like to thank J.~Hartmann,
B.~H.~Matzat and M.~Wibmer for helpful comments and suggestions on the 
paper.

\section{Basic notation}\label{basics}

All rings are assumed to be commutative with unit.
We use the usual notation for
  multiindices, namely
$\binom{\vect{i}+\vect{j}}{\vect{i}}=\prod_{\mu=1}^m
\binom{i_\mu+j_\mu}{i_\mu}$ and $\vect{T}^{\vect{i}}=T_1^{i_1}
T_2^{i_2}\cdots T_m^{i_m}$ for  $\vect{i}=(i_1,\dots,
i_m),\vect{j}=(j_1,\dots, j_m)\in\NN^m$ and $\vect{T}=(T_1,\dots, T_m)$. 

An {\markdef $m$-variate iterative 
derivation} on a ring $R$ is a homomorphism of rings $\theta:R\to
R[[T_1,\dots, T_m]]$, such that $\theta^{(\vect{0})}=\id_R$ and for
all $\vect{i},\vect{j}\in \NN^m$, $\theta^{(\vect{i})}\circ \theta^{(\vect{j})}=\binom{\vect{i}+\vect{j}}{\vect{i}}\theta^{(\vect{i}+\vect{j})}$,
where the maps $\theta^{(\vect{i})}:R\to R$ are defined by
$\theta(r)=:\sum_{\vect{i}\in\NN^m}
\theta^{(\vect{i})}(r)\vect{T}^{\vect{i}}$
(cf. \cite{heiderich}, Ch. 4).
In the case $m=1$ this is equivalent to
the usual definition of an iterative derivation given for example in
\cite{mat_hart}. 
The pair $(R,\theta)$ is then called an ID-ring and
$C_R:=\{ r \in R\mid \theta(r)=r\}$ is called the {\markdef ring of
  constants} of $(R,\theta)$.\footnote{The name {\em constants} is due to the
fact that all $\theta^{(\vect{i})}$ ($\vect{i}\ne \vect{0}$) vanish at
these elements analogous to the vanishing of derivations in
characteristic zero.}
An ideal $I\ideal R$ is called an
 {\markdef ID-ideal} if $\theta(I)\subseteq I[[\vect{T}]]$ and $R$ is
 {\markdef ID-simple} if $R$ has no nontrivial
 ID-ideals. Iterative derivations are extended to localisations by
 $\theta(\frac{r}{s}):=\theta(r)\theta(s)^{-1}$ and to tensor products
 by 
$$\theta^{(\vect{k})}(r\otimes s)=\sum_{\vect{i}+\vect{j}=\vect{k}}
\theta^{(\vect{i})}(r)\otimes \theta^{(\vect{j})}(s)$$ 
for all $\vect{k}\in \NN^m$.
The $m$-variate iterative derivation $\theta$ is called
{\markdef non-dege\-ne\-ra\-te} if the $m$ additive maps
$\theta^{(1,0,\dots,0)}, \theta^{(0,1,0,\dots,0)}, \dots ,
\theta^{(0,\dots,0,1)}$ (which acutally are derivations on $R$) are
$R$-linearly independent. 

Given an ID-ring $(R,\theta_R)$ over an ID-field $(F,\theta)$, we call
an element 
$x\in R$ {\markdef differentially finite over $F$} if the $F$-vector
space spanned by all $\theta^{(\vect{k})}(x)$ ($\vect{k}\in\NN^m$)
is finite dimensional. It is easy to see that the set of elements
which are differentially finite over $F$ form an ID-subring of $R$
that contains~$F$.

\begin{rem}\label{rem-on-IDs} (see also \cite{heiderich}, Ch. 4)

Given an $m$-variate iterative derivation $\theta$ on a ring $R$, one
obtains a set of $m$ ($1$-variate) iterative derivations
$\theta_1,\dots, \theta_m$ by defining 
$$\theta_1^{(k)}:=\theta^{(k,0,\dots,0)},\quad
\theta_2^{(k)}:=\theta^{(0,k,0,\dots,0)}, \quad \dots ,\quad \theta_m^{(k)}:= 
\theta^{(0,\dots,0,k)}$$ 
for all $k\in\NN$. By the iteration rule
for $\theta$ these iterative derivations commute, i.\,e. satisfy the
condition $\theta_i^{(k)}\circ
\theta_j^{(l)}=\theta_j^{(l)}\circ\theta_i^{(k)}$ for all $i,j\in
\{1,\dots, m\}, k,l\in\NN$. On the other hand, given $m$
commuting $1$-variate iterative derivations $\theta_1,\dots, \theta_m$ one obtains
an $m$-variate iterative derivation $\theta$ by defining
$$\theta^{(\vect{k})}:=\theta_1^{(k_1)}\circ \dots \circ
\theta_m^{(k_m)}$$ for all $\vect{k}=(k_1,\dots, k_m)\in\NN^m$.

Using the iteration rule one sees that the $m$-variate iterative
derivation $\theta$ is determined by the derivations $\theta_1^{(1)},\dots,
\theta_m^{(1)}$ if the characteristic of $R$ is zero, and by the set
of maps $\{\theta_1^{(p^\ell)},\dots, \theta_m^{(p^\ell)}\mid
\ell\in\NN\}$ if the characteristic of $R$ is $p>0$. Furthermore,
$\theta$ is non-degenerate if and only if for all $j=1,\dots,
m$ the derivation $\theta_j^{(1)}$ is nontrivial on $\bigcap_{i=1}^{j-1}
\Ker(\theta_i^{(1)})$.

\smallskip

Next we consider the case that $R=:F$ is a field of positive characteristic
$p$ and that $\theta$ is non-degenerate. Then the derivations
$\theta_1^{(1)},\dots, \theta_m^{(1)}$ are nilpotent $C_F$-endomorphisms of
$F$. Since they commute and $\theta$ is non-degenerate, there exist
$x_1,\dots, x_m\in F$ such that $\theta_i^{(1)}(x_j)=\delta_{ij}$ for
all $i,j$, where $\delta_{ij}$ denotes the Kronecker delta.
Therefore $\{ x_1^{e_1}\cdots x_m^{e_m} \mid 0\leq e_j\leq p-1 \}$
is a basis of $F$ as a vector space over $F_1:= \bigcap_{i=1}^{m}
\Ker(\theta_i^{(1)})$. Hence $F/F_1$ is a field extension of degree $p^m$.
Furthermore, the maps $\theta_1^{(p)},\dots, \theta_m^{(p)}$ are
derivations on $F_1$, they also are nilpotent and commute, and
$$\theta_i^{(p)}(x_j^p)=\left(\theta_i^{(1)}(x_j)\right)^p=\delta_{ij}.$$
So by the same argument, $F_1$ is a vector space over
$F_2:=F_1\cap \bigcap_{i=1}^{m} \Ker(\theta_i^{(p)})$ and
$[F_1:F_2]=p^m$. Repeating this, one obtains a descending sequence of
subfields $F_\ell:= F_{\ell-1}\cap \bigcap_{i=1}^{m}
\Ker(\theta_i^{(p^{\ell-1})})$ satisfying $[F_{\ell-1}:F_\ell]=p^m$.

This sequence will be useful in Section \ref{purely-insep}.
\end{rem}

\begin{defn}
Let $(F,\theta)$ be an ID-field, and
let $A=\sum_{\vect{k}\in \NN^m} A_{\vect{k}} \vect{T}^{\vect{k}}\in
\GL_n(F[[\vect{T}]])$ be a matrix satisfying the properties
$A_{\vect{0}}=\mathds{1}_n$ and 
$\binom{\vect{k}+\vect{l}}{\vect{l}}A_{\vect{k}+\vect{l}}=\sum_{\vect{i}+\vect{j}=\vect{l}}
\theta^{(\vect{i})}(A_{\vect{k}}) A_{\vect{j}}$ for all $\vect{k},\vect{l}\in\NN^m$.
Then an equation
$$\theta(\vect{y})=A\vect{y},$$
where $\vect{y}$ is a vector of indeterminants, is called an {\markdef
  iterative differential equation} (IDE) over $F$.\footnote{Throughout
  this article, iterative derivations are applied componentwise to
  vectors and matrices.}
\end{defn}



 \begin{defn}
An ID-ring $(R,\theta_R)\geq (F,\theta)$ is called a {\markdef pseudo
  Picard-Vessiot ring} (PPV-ring) for $\theta(\vect{y})=A\vect{y}$ if the
  following holds: 
\begin{enumerate}
\item $R$ is an ID-simple ring.
\item There is a fundamental solution matrix $Y\in\GL_n(R)$, \ie{} an invertible
  matrix satisfying $\theta(Y)=AY$.
\item As an $F$-algebra, $R$ is generated by the coefficients of $Y$
  and $\det(Y)^{-1}$.
\item $C_R=C_F$.
\end{enumerate}
The quotient field $E=\Quot(R)$ (which exists, since such a PPV-ring
is always an integral domain) is called a {\markdef pseudo
  Picard-Vessiot field} (PPV-field)
for the IDE $\theta(\vect{y})=A\vect{y}$.
\end{defn}

\begin{rem}
The condition on the $A_{\vect{k}}$ given in the definition of the IDE
is equivalent to the condition that
$\theta_R^{(\vect{k})}(\theta_R^{(\vect{l})}(Y_{ij}))=
\binom{\vect{k}+\vect{l}}{\vect{k}}\theta_R^{(\vect{k}+\vect{l})}(Y_{ij})$ holds for a fundamental
solution matrix $Y=(Y_{ij})_{1\leq i,j\leq n}\in\GL_n(R)$.

Furthermore, the condition $A_{\vect{0}}=\mathds{1}_n$ already implies
that the matrix $A$ is invertible.
\end{rem}

\begin{notation}
From now on, $(F,\theta)$ denotes an ID-field of positive
characteristic $p$, and $K=C_F$ its field of
constants. We assume that $K$ is perfect, and that the $m$-variate
iterative derivation $\theta$ is non-degenerate.
\end{notation}

\section{Galois theory}\label{galois-theory}

In this section, we deal with the Galois group scheme corresponding to a
PPV-extension. We will see various facettes of the group structure and
group action, and provide the Galois correspondence for
PPV-extensions.

We begin with a characterisation of the PPV-ring in a PPV-field.

\begin{prop}\label{diff-finite}
Let $(R,\theta_R)$ be a PPV-ring over $F$ for an IDE
$\theta(\vect{y})=A\vect{y}$ and $E=\Quot(R)$. Then $R$ is equal to
the set of elements in $E$ which are differentially finite over $F$.
\end{prop}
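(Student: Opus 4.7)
The plan is to prove both inclusions of the equality, with the nontrivial content lying in showing that every differentially finite element of $E$ lies in $R$.

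For the inclusion ``$R\subset\{\text{diff.~finite elements}\}$'', recall that the set of differentially finite elements is an $F$-subalgebra of $R$, so it is enough to verify the property on the algebra generators of $R$ over $F$, namely the entries $Y_{ij}$ of the fundamental solution matrix and $\det(Y)^{-1}$. Comparing coefficients of $\vect{T}^{\vect{k}}$ in $\theta_R(Y)=AY$ yields $\theta_R^{(\vect{k})}(Y)=A_{\vect{k}}Y$, so the $F$-span of $\{\theta_R^{(\vect{k})}(Y_{ij})\mid\vect{k}\in\NN^m\}$ is contained in the finite-dimensional span of the entries $Y_{lm}$. Similarly, since $\theta_R$ is a ring homomorphism, $\theta_R(\det(Y))=\det(A)\det(Y)$, yielding a one-dimensional $F$-span for $\det(Y)$, and hence also for $\det(Y)^{-1}$.

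For the converse, let $x\in E$ be differentially finite over $F$, and let $V\subset E$ be the finite-dimensional $F$-space spanned by $\{\theta^{(\vect{k})}(x)\mid\vect{k}\in\NN^m\}$. A short computation using the iteration rule $\theta^{(\vect{l})}\circ\theta^{(\vect{k})}=\binom{\vect{l}+\vect{k}}{\vect{l}}\theta^{(\vect{l}+\vect{k})}$ together with the Leibniz rule applied to the $F$-coefficients shows that $V$ is $\theta$-stable. The crucial device is then the ``denominator ideal''
\[
I:=\{s\in R\mid sV\subset R\},
\]
which is a nonzero ideal of $R$ because a basis of $V$ can be moved into $R$ after multiplication by a single common denominator in $R$.

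I would next show that $I$ is an ID-ideal. For $s\in I$, $v\in V$, and $\vect{l}\in\NN^m$ the Leibniz rule gives
\[
\theta^{(\vect{l})}(sv)=\sum_{\vect{a}+\vect{b}=\vect{l}}\theta^{(\vect{a})}(s)\,\theta^{(\vect{b})}(v),
\]
and the left-hand side lies in $R$ because $sv\in R$. An induction on $|\vect{l}|$---using that $\theta^{(\vect{b})}(v)\in V$ by $\theta$-stability of $V$, and that $\theta^{(\vect{a})}(s)\in I$ for $|\vect{a}|<|\vect{l}|$ by the inductive hypothesis---shows that $\theta^{(\vect{l})}(s)\,v\in R$ for every $v\in V$, i.e.\ $\theta^{(\vect{l})}(s)\in I$. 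Since $R$ is ID-simple, the nonzero ID-ideal $I$ must equal $R$, so $1\in I$, i.e.\ $V\subset R$, and in particular $x\in R$.

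The main obstacle is this last step: isolating the diophantine-style obstruction between $V$ and $R$ as an actual ID-ideal, and handling the Leibniz tail by induction. This is the only point where the ID-simplicity of $R$ (one of the defining properties of a PPV-ring) enters, and it is precisely what upgrades the finiteness condition on $x$ to genuine membership in $R$.
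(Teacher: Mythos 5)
Your proof is correct and follows essentially the same route as the paper: the generators $Y_{ij}$ and $\det(Y)^{-1}$ are differentially finite because $\theta^{(\vect{k})}(Y)=A_{\vect{k}}Y$ and $\theta(\det(Y)^{-1})=\det(A)^{-1}\det(Y)^{-1}$, and the converse uses exactly the paper's denominator ideal $\{s\in R\mid sV\subseteq R\}$ together with ID-simplicity. The only difference is that you spell out the induction showing this ideal is an ID-ideal, which the paper merely asserts; that added detail is welcome and correct.
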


\begin{proof} (Compare \cite{mat_hart}, Thm. 4.9, for the case
  when  $K$ is
  algebraically closed and $\theta$ is univariate%
.)\\
Let $Y\in\GL_n(R)$ be a fundamental solution matrix for the IDE. Then
by definition $\theta^{(\vect{k})}(Y)=A_{\vect{k}}Y$ and hence for all
$i,j$ and all $\vect{k}\in\NN^m$ the derivatives
$\theta^{(\vect{k})}(Y_{ij})$ are in the
$F$-vector space spanned by all $Y_{ij}$, i.\,e. all $Y_{ij}$ are
differentially finite. Furthermore, one has 
$\theta(\det(Y)^{-1})=\det(\theta(Y))^{-1}=\det(AY)^{-1}=\det(A)^{-1}\det(Y)^{-1}$,
i.\,e. $\det(Y)^{-1}$ is differentially finite. Therefore, $R$ is
generated by differentially finite elements, and since the 
differentially finite elements form a ring, all elements of $R$ are
differentially finite.

On the other hand, let $x\in E$ be differentially finite over
$F$ and let $W_F(x)$ be the $F$-vector
space spanned by all $\theta^{(\vect{k})}(x)$ ($\vect{k}\in\NN^m$). 
Then the set $I_x:=\{ r\in R\mid r\cdot W_F(x)\subseteq R\}$ is
an ID-ideal of $R$. Since $W_F(x)$ is finite dimensional and $E$ is
the quotient field of $R$, one has $I_x\ne 0$. Since $R$ is ID-simple,
this implies $I_x=R$. Hence $1\cdot W_F(x)\subseteq R$, and in
particular $1\cdot x=x\in R$.
\end{proof}

From this characterisation of the PPV-ring as the ring of differentially finite
elements, we immediately get the following.

\begin{cor}\label{unique-PPV-ring}
Let $E$ be a PPV-field over $F$ for several IDEs. Then the PPV-ring
inside $E$ is unique and independent of the particular IDE.
\end{cor}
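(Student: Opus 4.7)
The plan is to read off the result directly from Proposition \ref{diff-finite}. The key observation is that the characterization of the PPV-ring $R$ as the set of elements of $E$ which are differentially finite over $F$ is intrinsic to the pair $(F,E)$ of ID-fields: it makes no reference whatsoever to the particular matrix $A$ or the IDE $\theta(\vect{y})=A\vect{y}$ for which $E$ was presented as a PPV-field.

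Concretely, I would argue as follows. Suppose $E$ is a PPV-field over $F$ for two IDEs, with associated PPV-rings $R_1,R_2\subseteq E$. Applying Proposition \ref{diff-finite} to each of them yields
\[
R_1 \;=\; \{\,x\in E\mid x \text{ is differentially finite over }F\,\} \;=\; R_2,
\]
since the right-hand side depends only on the ID-structures of $F$ and $E$. This gives both uniqueness of the PPV-ring inside $E$ and its independence of the chosen IDE.

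There is really no obstacle here; the entire content of the corollary is the intrinsic nature of the characterization in Proposition \ref{diff-finite}. The only thing worth verifying in passing is that the notion of being ``differentially finite over $F$'' is formulated purely in terms of the ambient ID-field $E$ and its ID-subfield $F$, which is immediate from the definition given earlier (the $F$-span of all $\theta^{(\vect{k})}(x)$ is finite-dimensional), and that the ID-structure on $E$ is part of the given data, not something reconstructed from $A$.
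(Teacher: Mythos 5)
Your proof is correct and is exactly the paper's argument: the corollary is stated there as an immediate consequence of Proposition \ref{diff-finite}, since the characterization of the PPV-ring as the set of differentially finite elements of $E$ over $F$ depends only on the ID-structures of $F$ and $E$ and not on the particular IDE.
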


\subsection{The Galois group scheme}

For a PPV-ring $R/F$ we define the functor
$$\underline{\Aut}^{\ID}(R/F): (\cat{Algebras} / K) \to (\cat{Groups}),
L\mapsto \Aut^{\ID}(R\otimes_K L/F\otimes_K L)$$
where $L$ is provided with the trivial iterative derivation.

In \cite{maurischat}, Sect. 10, it is shown that the functor
$\G:=\underline{\Aut}^{\ID}(R/F)$ is represent\-able by a $K$-algebra of
finite type and hence is an affine group scheme of finite type over 
$K$, which is called the (iterative differential) {\markdef Galois
  group scheme} of the extension $R$ over $F$ -- denoted by
$\underline{\Gal}(R/F)$ --, or also the
Galois group scheme of  the extension $E$ over $F$, $\underline{\Gal}(E/F)$, where
$E=\Quot(R)$ is the corresponding PPV-field.\footnote{This is justified by the
fact given in Corollary \ref{unique-PPV-ring} that the PPV-ring can be
recovered from the PPV-field without regarding an IDE.
Also take care that the functor $\underline{\Aut}^{\ID}(E/F)$ is not
isomorphic to $\underline{\Aut}^{\ID}(R/F)$. Hence the Galois group
scheme of $E/F$ has to be defined using the PPV-ring.}
Furthermore $\Spec(R)$ is a $(\G \times_{K}F)$-torsor and the
corresponding isomorphism of rings
$$\gamma:R\otimes_F R\to R\otimes_K K[\G]$$
is an $R$-linear ID-isomorphism.

By restricting $\gamma$ to the constants, one obtains that
$K[\G]$ is isomorphic 
to $C_{R\otimes_F R}$. One checks by calculation (see also \cite{takeuchi})
that the comultiplication on $K[\G]$ is induced via
this isomorphism by the map
$$R\otimes_F R\longrightarrow (R\otimes_F R)\otimes_R (R\otimes_F R),
a\otimes b\mapsto (a\otimes 1)\otimes (1\otimes b),$$
and the counit map $ev:K[\G]\to K$ is induced by the
multiplication 
$$R\otimes_F R\longrightarrow R, a\otimes b\mapsto ab.$$

\comment{
More generally, we have
\begin{prop}\label{central_iso}
Let $R$ be a PPV-ring for an IDE $\theta(\vect{y})=A\vect{y}$ with
fundamental solution matrix $\bar{X}\in\GL_n(R)$. Let
$T\geq F$ be an ID-simple 
ID-ring with $C_T=K$ such that there exists a fundamental solution
matrix $Y\in \GL_n(T)$. Furthermore, let $U:=C_{T\otimes_F R}$ be the
$K$-algebra of constants of $T\otimes_F R$.
Then there exists a $T$-linear ID-isomorphism
$$\gamma_T: T\otimes_F R \to T\otimes_K U,$$
which is given by $\bar{X}_{ij}\mapsto \sum_{k=1}^n Y_{ik}\otimes
\bar{Z}_{kj}$ for some elements $\bar{Z}_{kj}\in U$.
\end{prop}

\begin{thm}
Let $(T,\theta_T)$ be an ID-simple ID-ring over $F$ with $C_T=C_F$ and 
having a fundamental solution matrix $Y\in\GL_n(T)$ for some IDE
$\theta(\vect{y})=A\vect{y}$. Then the subalgebra of $T$ generated by
the coefficients of $Y$ and by $\det(Y)^{-1}$ is the unique PPV-ring
inside $T$ for this IDE.
\end{thm}

\begin{proof}
Since for two fundamental solution matrices $Y$ and $\tilde{Y}$ in
$T$, the coefficients of $Y^{-1}\tilde{Y}$ are constants,
i.\,e. elements in $C_T=C_F$, the subalgebra of $T$ generated by the
coefficients and the inverse of the determinant is the same for every
fundamental solution matrix. This proves the uniqueness of a PPV-ring
inside $T$.

For showing that this subalgebra is a PPV-ring, we only have to show
that it is ID-simple.

\vdots

\end{proof}

} 

\comment{
\subsection{Galois correspondence}\label{galois_corres}

Let $S$ be a $K$-algebra and $\H/K$ be a subgroup functor of the
functor $\underline{\Aut}(S/K)$, \ie{} for every $K$-algebra $L$, the set
$\H(L)$ is a group acting on $S_L$ and this action is functorial in $L$. An
element $s\in S$ is then called {\markdef invariant} if for all $L$,
the element $s\otimes 1\in S_L$ is invariant under $\H(L)$. 
The ring of invariants is denoted by $S^{\H}$. (In \cite{jantzen},
I.2.10 the invariant elements are called ``fixed points''.)

Let $E=\Quot(S)$ be the localisation of $S$ by all non zero divisors.
Since every automorphism of $S\otimes_K L$ extends uniquely to an
automorphism of $\Quot(S\otimes_K L)$, the functor
$\underline{\Aut}(S/K)$ is a subgroup functor of the group functor
$$(\cat{Algebras} / K) \to (\cat{Groups}),
L\mapsto \Aut(\Quot(S\otimes_K L)/\Quot(F\otimes_K L)).$$
In this sense, we call an element $e=\frac{r}{s}\in E$ {\markdef
  invariant} under $\H$, if for all 
$K$-algebras $L$ and all $h\in\H(L)$,
$$\frac{h.(r\otimes 1)}{h.(s\otimes 1)}=\frac{r\otimes 1}{s\otimes
  1}=e\otimes 1.$$
The ring of invariants of $E$ is denoted by $E^{\H}$.

In the following, let again $R$ be a PPV-ring over $F$, $E=\Quot(R)$ its
field of fractions and $\G=\underline{\Gal}(R/F)$ the Galois group
scheme of $R$ over $F$. Furthermore, let
$\rho:=\gamma\restr{1\otimes R}:R\to R\otimes K[\G]$ be the
ID-homomorphism which describes the action of $\G$ on $R$.
} 

Let $\H \leq \G$ be a subgroup functor, \ie{} for every
$K$-algebra $L$, the set $\H(L)$ is a group acting on $R_L:=R\otimes_K
L$ and this action is functorial in $L$. 
An element $r\in R$ is then called {\markdef invariant} under $\H$ if
for all $L$, the element $r\otimes 1\in R_L$ is invariant under
$\H(L)$. The ring of invariants is denoted by $R^{\H}$. (In
\cite{jantzen}, I.2.10 the invariant elements are called ``fixed
points''.)

Let $E=\Quot(R)$ be the quotient field and for all $L$ let
$\Quot(R\otimes_K L)$ be the localisation by all nonzero divisors.
Since every automorphism of $R\otimes_K L$ extends uniquely to an
automorphism of $\Quot(R\otimes_K L)$, the functor
$\underline{\Aut}(R/F)$ is a subgroup functor of the group functor
$$(\cat{Algebras} / K) \to (\cat{Groups}),
L\mapsto \Aut(\Quot(R\otimes_K L)/\Quot(F\otimes_K L)).$$
In this sense, we call an element $e=\frac{r}{s}\in E$ {\markdef
  invariant} under $\H$, if for all 
$K$-algebras $L$ and all $h\in\H(L)$,
$$\frac{h.(r\otimes 1)}{h.(s\otimes 1)}=\frac{r\otimes 1}{s\otimes
  1}=e\otimes 1.$$
The ring of invariants of $E$ is denoted by $E^{\H}$.

\begin{rem}
The action of $\G:=\underline{\Gal}(R/F)$ on $R$ is fully described by the
ID-homomorphism $\rho:=\gamma\restr{1\otimes R}:R\to R\otimes_K K[\G]$.
Namely, for a $K$-algebra $L$ and $g\in \G(L)\isom \Hom(K[\G],L)$, one
has $g.(r\otimes 1)=(1\otimes g)(\rho(r))\in R\otimes_K L$ for all
$r\in R$.
\end{rem}

\begin{prop}
Let $E/F$ be a PPV-extension with PPV-ring $R$ and Galois group scheme $\G$.
An ID-field $\tF$, with $F\leq \tF\leq E$, is a PPV-field over $F$, if
and only if it is stable under the action of $\G$, i.\,e. if $\rho(R\cap
\tF)\subseteq (R\cap \tF)\otimes K[\G]$.
\end{prop}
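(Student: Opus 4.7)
\medskip

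\textbf{Proof plan.} Set $\tR := R\cap\tF$. By Proposition~\ref{diff-finite}, $R$ coincides with the ring of elements of $E$ that are differentially finite over $F$, so $\tR$ is precisely the ring of differentially finite elements of $\tF$ over $F$; in particular $\tR$ is an ID-subring of $\tF$ containing $F$, and it will be the candidate PPV-ring whenever $\tF/F$ is a PPV-extension. Recall also that the action is encoded by $\rho = \gamma|_{1\otimes R}$, so stability of $\tR$ under the action of $\G$ is precisely the inclusion $\rho(\tR)\subseteq \tR\otimes_K K[\G]$.

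For the direction ($\Rightarrow$), assume $\tF$ is a PPV-field over $F$. By Corollary~\ref{unique-PPV-ring}, $\tF$ contains a unique PPV-ring, and by the characterisation in Proposition~\ref{diff-finite} this ring equals $\tR$. Choose an IDE for which $\tR$ is a PPV-ring and a fundamental solution matrix $\tilde{Y}\in \GL_n(\tR)$. Given a $K$-algebra $L$ and $g\in\G(L)$, the element $g.(\tilde{Y}\otimes 1)$ is another fundamental solution of the same IDE in $R\otimes_K L$, so it equals $(\tilde{Y}\otimes 1)\cdot M_g$ for some $M_g\in\GL_n(C_{R\otimes_K L})$. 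Since $C_R=K$ and $L$ carries the trivial iterative derivation, $C_{R\otimes_K L}=L$, and hence each entry of $g.(\tilde{Y}\otimes 1)$ lies in $\tR\otimes_K L$. Applying this to the universal case $L=K[\G]$ with $g=\id$ shows $\rho(\tilde{Y}_{ij})\in \tR\otimes_K K[\G]$, and the same works for $\det(\tilde{Y})^{-1}$. Since $\tR$ is generated over $F$ by these elements and $\rho$ is an $F$-algebra map, $\rho(\tR)\subseteq \tR\otimes_K K[\G]$.

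For the direction ($\Leftarrow$), assume $\rho(\tR)\subseteq \tR\otimes_K K[\G]$. The plan is to exhibit an IDE for which $\tR$ is a PPV-ring; then $\tF=\Quot(\tR)$ is automatically a PPV-field. Pick any nonzero $x\in \tR$; since $x\in R$ it is differentially finite over $F$, so $V:=W_F(x)$ is a finite-dimensional $\theta$-stable $F$-subspace of $\tR$. Stability of $\tR$ under $\rho$ together with the coaction formula for iterative derivatives (Remark following the definition of IDE) implies that the $F$-span of the $\G$-translates of $V$ inside $\tR$ is still finite-dimensional and now both $\theta$-stable and $\G$-stable; call it $V'$. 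A basis of $V'$ together with the requirement of being sent to its $\theta$-derivatives gives an IDE over $F$, and by construction $\tR$ contains an $F$-linearly independent family of solutions that forms a fundamental solution matrix (one uses that $C_{\tR}\subseteq C_R=K$ and the standard Wronskian argument to see that the chosen basis is already a $K$-basis of the solution space). The subring generated over $F$ by the entries of this matrix and the inverse of its determinant is an ID-simple, $\G$-stable subring of $\tR$ whose quotient field lies in $\tF$; iterating this over $F$-algebra generators of $\tR$ and combining the resulting IDEs into a single block-diagonal IDE shows $\tR$ itself is a PPV-ring.

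The main obstacle is the backward direction, specifically showing that the ID-subring of $\tR$ produced in this way actually exhausts $\tR$ and is ID-simple. ID-simplicity should follow from the fact that any nonzero ID-ideal of the constructed subring, being $\G$-stable by the stability hypothesis, would extend to a nonzero ID-ideal of $R$; together with $C=K$, this forces triviality. Exhaustion uses that every element $x\in\tR$ is differentially finite and that $W_F(x)$ has already been incorporated into the fundamental matrix at the previous step, so $x$ lies in the $F$-algebra generated by a fundamental solution matrix and its inverse determinant.
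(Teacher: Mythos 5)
Your forward direction is correct, and it is essentially the paper's argument in different clothing: the paper obtains $\rho(\tR)\subseteq \tR\otimes_K K[\G]$ from the commutative diagram of torsor isomorphisms $\tR\otimes_F\tR\isom\tR\otimes_K C_{\tR\otimes_F\tR}$ sitting inside $R\otimes_F R\isom R\otimes_K K[\G]$, and your identity $g.(\tilde{Y}\otimes 1)=(\tilde{Y}\otimes 1)M_g$ with $M_g\in\GL_n(C_{R\otimes_K L})$ is exactly that restriction written out on a fundamental solution matrix. (You do quietly use $C_{R\otimes_K L}=C_R\otimes_K L=L$; that is a standard lemma in this setting and fine to invoke, but it should be cited.)

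The backward direction is where the proposal has a genuine gap --- and it is also the direction the paper does not reprove: it simply refers to Theorem \ref{galois_correspondence}\,iii), i.e.\ to \cite{maurischat}, Thm.~11.5. The concrete problem in your argument is the passage from a finite-dimensional $\theta$-stable $F$-subspace $V'\subseteq\tR$ with basis $e_1,\dots,e_n$ to a fundamental solution matrix in $\GL_n(\tR)$. The basis yields an IDE $\theta(\vect{y})=A\vect{y}$ for which the column vector with entries $e_1,\dots,e_n$ is \emph{one} solution; a fundamental solution matrix requires $n$ solutions that are linearly independent over the constants, and nothing in your construction produces the remaining $n-1$ columns inside $\tR$. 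The Wronskian argument only shows that constant-independent solutions are $F$-independent; it cannot manufacture missing solutions. (Compare Prop.~\ref{finite-id-ext}, where for precisely this reason one must pass to a universal solution ring and accept an enlargement of the constants.) This is exactly the point at which $\G$-stability must be used in an essential, nontrivial way: one shows that $\gamma$ restricts to an isomorphism $\tR\otimes_F\tR\to\tR\otimes_K C_{\tR\otimes_F\tR}$ with $C_{\tR\otimes_F\tR}$ a sub-Hopf algebra of $K[\G]$, deduces that $\H:=\uGal(E/\tF)$ is normal, and then invokes part \ref{normal_subgroup}) of the correspondence to conclude that $\tR=R^{\H}$ is a PPV-ring. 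Your ID-simplicity argument has a parallel soft spot: a nonzero ID-ideal $I\ideal\tR$ generates an ID-ideal of $R$, hence all of $R$, but to conclude $I=\tR$ you need $IR\cap\tR=I$, i.e.\ faithful flatness of $R$ over $\tR$, which again comes out of the normality/torsor argument rather than being available up front. As written, the converse implication is not established.
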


\begin{proof}
If $\tF$ is a PPV-field, its PPV-ring $\tR$ is the set of elements in
$\tF$ which are
differentially finite over $F$ (cf. Prop \ref{diff-finite}), in
particular we have $\tR=\tF\cap R$.
Hence we obtain a commutative diagram:
\begin{center}$\xymatrix{
\tR\otimes_F \tR \ar[r]^-{\isom} \ar[d] & \tR\otimes_K K[\uGal(\tR/F)]=\tR\otimes_K
C_{\tR\otimes_F \tR} \ar[d] \\
R\otimes_F R \ar[r]^-{\isom}& R\otimes_K K[\G]= R\otimes_K C_{R\otimes_F R}
}$\end{center}
But this implies $\rho(\tR)\subseteq \tR\otimes_K
C_{\tR\otimes_F \tR}\subseteq \tR\otimes_K K[\G]$, i.\,e. $\tF$ is
stable under the action of $\G$.

The converse is given in Theorem \ref{galois_correspondence},iii).
\end{proof}

\begin{thm}{\bf (Galois correspondence)}\label{galois_correspondence}
Let $E/F$ be a PPV-extension with PPV-ring $R$ and Galois group scheme
$\G$.
\begin{enumerate}
\item There is an antiisomorphism of the lattices
$$\fH:=\{ \H \mid \H\leq\G \text{ closed subgroup scheme of }\G
\}$$
and
$$\fM:=\{ M \mid F\leq M\leq E \text{ intermediate ID-field} \}$$
given by 
$\Psi:\fH \to \fM,\H\mapsto E^{\H}$ and 
$\Phi:\fM \to \fH, M\mapsto \underline{\Gal}(E/M)$.
\item\label{normal_subgroup} If $\H\leq \G$ is normal, then $E^{\H}=\Quot(R^{\H})$ and
  $R^{\H}$ is a PPV-ring over $F$ with Galois group scheme 
$\underline{\Gal}(R^{\H}/F)\isom \G/\H$.
\item If $M\in\fM$ is stable under the action of $\G$, then $\H:=\Phi(M)$
 is a normal subgroup scheme of $\G$, $M$ is a PPV-extension of $F$ and
$\underline{\Gal}(M/F)\isom \G/\H$.
\item For $\H\in \fH$, the extension $E/E^{\H}$ is separable if and
  only if $\H$ is reduced.
\end{enumerate}
\end{thm}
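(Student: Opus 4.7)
The plan centres on the torsor isomorphism
$\gamma: R\otimes_F R \xrightarrow{\sim} R\otimes_K K[\G]$ together with
Proposition~\ref{diff-finite}. A preliminary observation to establish
first: for every intermediate ID-field $M\in\fM$, the extension $E/M$ is
itself a PPV-extension with PPV-ring $\tR:=M\cdot R\subseteq E$. Indeed,
the fundamental solution matrix $Y\in\GL_n(R)$ still satisfies an IDE
over $M$; $\tR$ is ID-simple (any nontrivial ID-ideal would intersect
$R$ in a nontrivial ID-ideal); and $C_{\tR}=K$. The canonical surjection
$R\otimes_F R\twoheadrightarrow \tR\otimes_M \tR$ identifies
$K[\Phi(M)]=C_{\tR\otimes_M \tR}$ as a Hopf algebra quotient of
$K[\G]=C_{R\otimes_F R}$, so $\Phi(M)=\underline{\Gal}(E/M)$ embeds
canonically as a closed subgroup scheme of $\G$.

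For part~(i), the inclusion $\H\subseteq \Phi(E^\H)$ is by definition of
invariants. For the reverse, applying the preliminary to $M=E^\H$ yields
a factorisation
$K[\G]\twoheadrightarrow K[\H]\twoheadrightarrow K[\Phi(E^\H)]$ through
the action--quotient, forcing $\Phi(E^\H)=\H$. The containment
$M\subseteq E^{\Phi(M)}$ is clear; for the converse I would invoke the
$\Phi(M)$-torsor identification
$\tR\otimes_M \tR\cong \tR\otimes_K K[\Phi(M)]$: the
$\Phi(M)$-invariants of the right-hand side are $\tR\otimes 1$, so
faithful flatness of $\tR/M$ descends this to
$\tR^{\Phi(M)}=\tR\cap M$, and taking fraction fields gives
$E^{\Phi(M)}=M$.

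For parts~(ii) and~(iii), assume $\H\leq \G$ is a normal closed subgroup
scheme. Normality provides the Hopf subalgebra
$K[\G/\H]\subseteq K[\G]$ and restricts $\rho:R\to R\otimes_K K[\G]$ to a
coaction $\rho^\H:R^\H\to R^\H\otimes_K K[\G/\H]$. The essential checks
are: $R^\H$ is ID-simple (any ID-ideal $I\ideal R^\H$ would generate the
$\G$-stable ID-ideal $IR\ideal R$ by normality); $C_{R^\H}=K$; and a
fundamental solution matrix for some new IDE over $F$ can be obtained by
selecting a finite-dimensional $\theta$-stable $F$-subspace
$V\subseteq R^\H$ which generates $R^\H$ as an $F$-algebra. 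This
exhibits $R^\H$ as a PPV-ring with $\underline{\Gal}(R^\H/F)\isom \G/\H$,
and $E^\H=\Quot(R^\H)$ then follows from faithfully flat descent.
Part~(iii) reduces to~(ii): $\G$-stability of $M$ identifies $\Phi(M)$
with the kernel of the induced morphism
$\G\to \underline{\Aut}^\ID(M/F)$, hence normal, whence $M=E^{\Phi(M)}$ is
PPV over $F$ by~(ii) with group $\G/\Phi(M)$.

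For part~(iv), by~(i) we may replace the base field with $E^\H$ and
reduce to the statement: the PPV-extension $E/F$ is separable iff $\G$
is reduced. Since $R\otimes_F R\cong R\otimes_K K[\G]$ with $R$ a
$K$-flat domain and $K$ perfect, $R\otimes_F R$ is reduced iff $K[\G]$
is reduced, iff $\G$ is reduced. The rings $R\otimes_F R$ and
$E\otimes_F E$ are simultaneously reduced (the latter is a localisation
of the former, and the former embeds into the latter by flatness of
$E/F$), so the claim reduces to the standard commutative-algebra
equivalence that $E/F$ is separable iff $E\otimes_F E$ is reduced. The
main obstacle I anticipate is the construction in~(ii) of the
fundamental solution matrix for $R^\H$, which requires carefully
extracting a $\theta$-stable finite-dimensional generating subspace from
$R^\H$ using the representation theory of $\G/\H$.
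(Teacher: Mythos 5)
The paper itself offers no proof of this theorem beyond the citation of \cite{maurischat}, Thm.~11.5, and your outline follows the same Takeuchi-style strategy as that reference: the torsor isomorphism $\gamma$, the identification $K[\uGal(E/M)]\isom C_{\tR\otimes_M\tR}$, and faithfully flat descent. The skeleton of (i)--(iii) is therefore the right one. Be aware, however, that several steps you present as canonical are genuine lemmas: the algebra map $R\otimes_F R\to\tR\otimes_M\tR$ is \emph{not} surjective in general (for $m\in M\setminus R$ the element $m\otimes 1$ need not lie in the image, since $M\not\subseteq R$), so the surjectivity of $K[\G]\to C_{\tR\otimes_M\tR}$ --- i.e.\ the fact that $\uGal(E/M)$ is a \emph{closed} subgroup scheme --- needs its own argument; the same goes for the inclusion $\Phi(E^{\H})\leq\H$ in (i) (the word ``forcing'' hides the hardest step of the whole theorem), for the finite generation of $R^{\H}$ over $F$, and for the faithful flatness of $R$ over $R^{\H}$ that you need in order to pass from $IR=R$ to $I=R^{\H}$.

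The one step that is actually incorrect as stated is in (iv). The implication ``$E\otimes_F E$ reduced $\Rightarrow E/F$ separable'' is false for general field extensions: take $k$ perfect of characteristic $p$, $F_0=k(u,v)$ and $E_0=F_0(w,\vartheta)$ with $w$ transcendental over $F_0$ and $\vartheta^p=uw^p+v$. Then $E_0/F_0$ is not separable (one has $\dim_{E_0}\Omega_{E_0/F_0}=2$ while $\operatorname{trdeg}(E_0/F_0)=1$), yet $E_0\otimes_{F_0}E_0$ is even a domain: it embeds into $E_0(w')[X]/(X^p-uw'^p-v)$ with $w'=1\otimes w$, and this quotient is a field because $uw'^p+v=(\vartheta w'/w)^p+v\,(1-w'/w)^p$ could only be a $p$-th power in $E_0(w')$ if $v$ were one, which it is not. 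So reducedness of $E\otimes_{E^{\H}}E$ does not suffice; separability requires reducedness of $E\otimes_{E^{\H}}L$ for every field $L$ over $E^{\H}$, equivalently for $L=(E^{\H})^{1/p}$. The torsor does give this, but you must base-change to an algebraic closure rather than to $E$ itself: since $K$ is perfect and $\H$ is reduced, $K[\H]$ is geometrically reduced, so $\overline{E}\otimes_{E^{\H}}\tR\isom\overline{E}\otimes_K K[\H]$ is reduced; as $\overline{E^{\H}}\otimes_{E^{\H}}\tR$ embeds into this by flatness, $\tR$ is geometrically reduced over $E^{\H}$ and hence $E/E^{\H}$ is separable. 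Your argument for the converse direction ($E/E^{\H}$ separable $\Rightarrow K[\H]$ reduced, via $K[\H]\hookrightarrow E\otimes_{E^{\H}}E$) is fine.
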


\begin{proof}
See \cite{maurischat}, Thm. 11.5.
\end{proof}

For a purely inseparable field extension $E/F$ one denotes by ${\rm
  e}(E/F)$ the {\markdef exponent} of the extension, i.\,e. the minimal number
$e\in\NN$ such that $E^{p^e}\subseteq F$. For an infinitesimal group
scheme $\G$ over $K$, the {\markdef height} of $\G$, denoted by ${\rm h}(\G)$, is the
minimal number $h\in \NN$ such that $x^{p^h}=0$ for all $x\in
K[\G]^+$. (Here $K[\G]^+$ is the kernel of the counit map
$ev:K[\G]\to K$ and is a nilpotent ideal by the definition of an
infinitesimal group scheme.)

\begin{cor}\label{infinitesimal_group}
Let $E/F$ be a PPV-extension with Galois group scheme $\G$.
Then $E/F$ is a purely inseparable extension if and only if
$\G$ is an infinitesimal group scheme. In this case, the exponent ${\rm
  e}(E/F)$ and the height ${\rm h}(\G)$ are equal.
\end{cor}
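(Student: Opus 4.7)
The plan is to exploit the torsor isomorphism $\gamma\colon R\otimes_F R \to R\otimes_K K[\G]$ to translate between $p^e$-power vanishing in $R$ modulo $F$ (\ie{} the exponent of $E/F$) and $p^h$-power vanishing in the augmentation ideal $K[\G]^+$ (\ie{} the height of $\G$). The key preliminary observation is that $\gamma$ identifies the kernel $J$ of the multiplication map $R\otimes_F R\to R$ with $R\otimes_K K[\G]^+$, because under $\gamma$ that multiplication corresponds to $\id_R\otimes ev$, as recorded in the description of the counit above.

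With $J$ thus identified, I would establish two equivalences, both resting on the additivity of the Frobenius in characteristic $p$. First, $R^{p^e}\subseteq F$ holds if and only if $x^{p^e}=0$ for every $x\in J$: the forward direction uses that $(R\otimes_F R)^{p^e}\subseteq R^{p^e}\otimes_F R^{p^e}\subseteq F$, which sits diagonally in $R\otimes_F R$ and meets $J$ only in $0$; the converse, applied to the generator $r\otimes 1-1\otimes r$ of $J$, yields $r^{p^e}\otimes 1 = 1\otimes r^{p^e}$, forcing $r^{p^e}\in F$ by faithful flatness of $R/F$ (which holds since $R$ is an $F$-vector space, hence free). Second, $z^{p^h}=0$ for all $z\in K[\G]^+$ if and only if $x^{p^h}=0$ for every $x\in R\otimes_K K[\G]^+$: one direction is immediate by taking $x=1\otimes z$, the other writes an arbitrary element as $\sum_i r_i\otimes z_i$ with $z_i\in K[\G]^+$ and computes $\bigl(\sum_i r_i\otimes z_i\bigr)^{p^h}=\sum_i r_i^{p^h}\otimes z_i^{p^h}$.

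Combining these through $\gamma$ yields: $R^{p^e}\subseteq F$ holds if and only if $z^{p^e}=0$ for all $z\in K[\G]^+$. Consequently ${\rm e}(E/F)$ and ${\rm h}(\G)$ are the infima of the same set of integers and agree whenever finite. For the qualitative statement: if $E/F$ is purely inseparable, then since $R$ is generated as an $F$-algebra by the entries of a fundamental solution matrix and $\det(Y)^{-1}$, a uniform exponent $e$ exists, so $K[\G]^+$ is nil; since $K[\G]$ is Noetherian of finite type and $K[\G]^+$ is finitely generated, $K[\G]^+$ is then nilpotent, making $K[\G]$ local Artinian with residue field $K$, \ie{} $\G$ is infinitesimal. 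Conversely, $\G$ infinitesimal of height $h$ gives $R^{p^h}\subseteq F$, so $E/F$ is purely inseparable. The nontrivial conceptual step is recognising that the torsor isomorphism $\gamma$ is exactly the bridge matching ``bounded exponent'' on the left with ``bounded height'' on the right; once that is in focus, the remainder is bookkeeping with Frobenius.
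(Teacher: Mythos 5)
Your proof is correct and takes essentially the same route as the paper: both arguments use the torsor isomorphism $\gamma$ together with the fact that the counit corresponds to multiplication $R\otimes_F R\to R$ to match $K[\G]^+$ with the kernel of multiplication, and then translate between exponent and height via additivity of Frobenius. You merely package this as an equality of ideals $\gamma(J)=R\otimes_K K[\G]^+$ where the paper argues element-by-element with $r\otimes s-s\otimes r$ and $\gamma^{-1}(1\otimes x)$, and you additionally make explicit the finiteness points (finite exponent from finite generation of $R$, nilpotence of the finitely generated nil ideal $K[\G]^+$) that the paper leaves implicit.
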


\begin{proof}
Let $\G$ be infinitesimal of height $h$ and let $ev:K[\G]\to K$ denote the
evaluation map corresponding to the neutral element of the group. Then 
for any $\frac{r}{s}\in E$, we have
$(\id\otimes ev)(\gamma(r\otimes s-s\otimes r))=0$,
i.\,e. $\gamma(r\otimes s-s\otimes r)\in R\otimes_K K[\G]^+$.
Since $\G$ is of height $h$, we obtain
$(r\otimes s-s\otimes r)^{p^h}=0$. Therefore
$r^{p^h}\otimes s^{p^h}=s^{p^h}\otimes r^{p^h} \in R\otimes_F R$
which means that $\frac{r^{p^h}}{s^{p^h}}\in F$. So $E/F$ is purely
inseparable of exponent $\leq h$.
On the other hand, let $E/F$ be purely inseparable of exponent $e$.
For arbitrary $x\in K[\G]^+$, let $\gamma^{-1}(1\otimes x)=:\sum_j
r_j\otimes s_j$. Then 
$$1\otimes x^{p^e}
= \gamma\left(\sum_j r_j^{p^e} \otimes s_j^{p^e}\right)
= \gamma\left(\sum_j r_j^{p^e}s_j^{p^e}\otimes 1\right)
= \sum_j r_j^{p^e}s_j^{p^e}\otimes 1.$$
Hence (e.g. by applying $\id \otimes ev$), one obtains $\sum_j
r_j^{p^e}s_j^{p^e}=0$ and $x^{p^e}=0$. Therefore $\G$ is infinitesimal
of height $\leq e$.
\end{proof}

\section{Purely inseparable extensions}\label{purely-insep}

As in the previous section, $F$ denotes a field of positive
characteristic $p$ with a non-degenerate $m$-variate iterative
derivation $\theta$ and a perfect field of constants $K=C_F$.

\begin{notation}
For all $\ell\in\NN$, 
let $J_\ell:=\left\{ (j_1,\dots,j_m)\in\NN^m\setminus
\{\vect{0}\}\mid \forall\, i: j_i<p^\ell \right\}$ and let
$$F_\ell:=\bigcap_{\vect{j}\in J_\ell} \Ker(\theta_F^{(\vect{j})}).$$
Actually, the subfields $F_\ell$ are the same as the ones defined in
Remark \ref{rem-on-IDs}.

Since $\theta_F(F_\ell)\subseteq F_\ell[[T_1^{p^\ell},\dots, T_m^{p^\ell}]]$, one obtains an
iterative derivation on $F_{[\ell]}:=(F_\ell)^{p^{-\ell}}$ by
$\theta_{F_{[\ell]}}(x):=\left(\theta_F(x^{p^\ell})\right)^{p^{-\ell}}$.
Obviously, it is the unique iterative derivation which turns
$F_{[\ell]}$ into an ID-extension of $F$.
\end{notation}

\begin{prop}\label{max-id-extension}

\begin{enumerate}
\item\label{leq-ell} For all $\ell\in\NN$, $F_{[\ell]}$ is the unique maximal purely
inseparable ID-extension of $F$ of exponent $\leq \ell$.
\item\label{formula} For all $\ell_1,\ell_2\in\NN$,
  $(F_{[\ell_1]})_{[\ell_2]}=F_{[\ell_1+\ell_2]}$.
\item\label{trivial} If $F_{[1]}=F$ then $F_{[\ell]}=F$ for all $\ell\in\NN$.
\item\label{eq-ell} If $F_{[1]}\ne F$ and $\theta$ is non-degenerate, then for all
  $\ell\in\NN$, the exponent of $F_{[\ell]}/F$ is exactly $\ell$.
\end{enumerate}
\end{prop}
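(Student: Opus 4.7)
For (i) I first note that $F_{[\ell]}^{p^\ell} = F_\ell \subseteq F$, so $F_{[\ell]}$ is itself a purely inseparable ID-extension of $F$ of exponent $\leq \ell$. For maximality, I would take any purely inseparable ID-extension $E/F$ of exponent $\leq \ell$ and any $x \in E$. The key identity is that $\theta_E : E \to E[[\vect{T}]]$, being a ring homomorphism in characteristic $p$, satisfies $\theta_E(y^{p^\ell}) = \theta_E(y)^{p^\ell}$; comparing coefficients gives $\theta_E^{(\vect{j})}(y^{p^\ell}) = 0$ unless every component of $\vect{j}$ is divisible by $p^\ell$. Applied to $y = x$, together with $x^{p^\ell} \in F$ and $\theta_E|_F = \theta_F$, this forces $\theta_F^{(\vect{j})}(x^{p^\ell}) = 0$ for all $\vect{j} \in J_\ell$, hence $x^{p^\ell} \in F_\ell$ and $x \in F_{[\ell]}$.

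For (ii) both inclusions drop out of (i). On the one hand $(F_{[\ell_1]})_{[\ell_2]}/F$ is purely inseparable of exponent $\leq \ell_1+\ell_2$, so it embeds into $F_{[\ell_1+\ell_2]}$. For the reverse, any $x \in F_{[\ell_1+\ell_2]}$ satisfies $x^{p^{\ell_2}} \in F_{[\ell_1+\ell_2]}^{p^{\ell_2}} = F_{\ell_1+\ell_2}^{p^{-\ell_1}} \subseteq F_{\ell_1}^{p^{-\ell_1}} = F_{[\ell_1]}$, giving $x \in (F_{[\ell_1]})_{[\ell_2]}$. Part (iii) then follows by a one-line induction using (ii): $F_{[\ell+1]} = (F_{[\ell]})_{[1]} = F_{[1]} = F$.

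For (iv) I would induct on $\ell$. By (i), the exponent of $F_{[\ell]}/F$ equals $\ell$ if and only if $F_{[\ell]} \neq F_{[\ell-1]}$; after taking $p^\ell$-th powers, this is equivalent to $F_\ell \neq F_{\ell-1}^p$. The inclusion $F_{\ell-1}^p \subseteq F_\ell$ is automatic from the iteration rule together with $\theta(z^p) = \theta(z)^p$. If the equality $F_\ell = F_{\ell-1}^p$ held, then $[F_{\ell-1}:F_{\ell-1}^p] = [F_{\ell-1}:F_\ell] = p^m$, the latter by Remark \ref{rem-on-IDs} (using non-degeneracy). Via the Frobenius isomorphism, $[F_{\ell-1}:F_{\ell-1}^p] = [F:F^p]$, so $[F:F^p] = p^m$; combined with $F^p \subseteq F_1$ and $[F:F_1] = p^m$, this forces $F^p = F_1$, which is in turn equivalent to $F_{[1]} = F$ (directly from $F_{[1]} = F_1^{p^{-1}}$), contradicting the hypothesis.

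The main obstacle is (iv): parts (i)--(iii) are essentially formal consequences of the characterisation in (i), whereas in (iv) one has to find the correct reformulation $F_\ell = F_{\ell-1}^p$ and then chain several Frobenius-type degree identities back to the single-step hypothesis $F_{[1]} \neq F$.
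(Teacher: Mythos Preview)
Your proof is correct and follows essentially the same route as the paper's. Parts (i)--(iii) match almost verbatim (the paper compresses your argument for (i) into the single line $E^{p^\ell}\subseteq F\cap E_\ell\subseteq F_\ell$). For (iv) the paper also reduces to $F_{[\ell+1]}\ne F_{[\ell]}$, rewrites this as $F_{\ell+1}\ne (F_\ell)^p$, and then compares degrees; the only cosmetic difference is that the paper computes $[F:(F_\ell)^p]$ in two ways, whereas you invoke the standard identity $[F:F^p]=[F_{\ell-1}:F_{\ell-1}^p]$ for the finite extension $F/F_{\ell-1}$---which unwinds to the same tower computation.
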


\begin{proof}
For the proof of part \ref{leq-ell}, we have already seen that
$F_{[\ell]}/F$ is an ID-extension, and by definition it is purely
inseparable of exponent $\leq \ell$. If $E$ is 
a purely inseparable ID-extension of $F$ of exponent $\leq \ell$, then
$E^{p^\ell}\subseteq F \cap E_{\ell}\subseteq F_{\ell}$ and therefore
$E\subseteq F_{[\ell]}$. Hence $F_{[\ell]}$ is the unique maximal
ID-extension of this kind.

By definition $(F_{[\ell_1]})_{[\ell_2]}$ is an ID-extension of $F$ of
exponent $\leq \ell_1+\ell_2$. Hence by part \ref{leq-ell}, we have
$(F_{[\ell_1]})_{[\ell_2]}\subseteq F_{[\ell_1+\ell_2]}$. On the other
hand $\left(F_{[\ell_1+\ell_2]}\right)^{p^{\ell_1+\ell_2}}\subseteq F$
and so $\left(F_{[\ell_1+\ell_2]}\right)^{p^{\ell_2}}\subseteq
F_{[\ell_1]}$. Hence $F_{[\ell_1+\ell_2]}$ is an ID-extension of
$F_{[\ell_1]}$ of exponent $\leq \ell_2$ and therefore contained in
$(F_{[\ell_1]})_{[\ell_2]}$. This proves part \ref{formula}.

Part \ref{trivial} is a direct consequence of part \ref{formula}. So
it remains to prove \ref{eq-ell}. For this it suffices to show that
$F_{[\ell+1]}\ne F_{[\ell]}$ for all $\ell$, because this implies that
${\rm e}(F_{[\ell]}/F)\geq {\rm e}(F_{[\ell-1]}/F)+1\geq \dots \geq 
{\rm e}(F_{[1]}/F)+\ell-1=\ell$.

By Remark \ref{rem-on-IDs}, one has $\dim_{F_{\ell+1}}(F_\ell)=p^m$,
since $\theta$ is non-degenerate. Assume that
$F_{[\ell+1]}=F_{[\ell]}$. Then
$F_{\ell+1}=\left(F_{[\ell+1]}\right)^{p^{\ell+1}}
=\left(F_{[\ell]}\right)^{p^{\ell+1}}=(F_\ell)^p$
and therefore $F$ is a finite extension of $(F_\ell)^p$ of degree
$[F:(F_\ell)^p]=[F:F_{\ell+1}]=p^{(\ell+1) m}$. On the other hand,
$$[F:(F_\ell)^p]=[F:F^p]\cdot [F^p:(F_\ell)^p]=[F:F^p]\cdot
[F:F_\ell]=p^{\ell m}[F:F^p].$$
So $[F:F^p]=p^m=[F:F_1]$, and hence $F_1=F^p$, in contradiction to
$F_{[1]}\ne F$.
\end{proof}

\begin{thm}
Let $E/F$ be a PPV-extension and let $\ell\in\NN$. Then
$E_{[\ell]}/F_{[\ell]}$ is a PPV-extension, and its Galois group
scheme is related to $\uGal(E/F)$ by $({\bf
  Frob}^\ell)^{*}\left(\uGal(E_{[\ell]}/F_{[\ell]})\right)\isom
\uGal(E/F)$, where ${\bf Frob}$ denotes the Frobenius
morphism on $\Spec(K)$.
\end{thm}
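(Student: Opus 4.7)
The plan is to base-change the PPV-ring $R$ along $F\hookrightarrow F_{[\ell]}$ inside $E_{[\ell]}$, verify that the result is a PPV-ring for the same matrix IDE, identify its quotient field with $E_{[\ell]}$, and then compare Galois group schemes by a Frobenius bookkeeping.

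Let $Y\in\GL_n(R)$ be a fundamental solution matrix for the defining IDE $\theta(\vect{y})=A\vect{y}$, and set $\tR:=F_{[\ell]}[Y,\det(Y)^{-1}]\subseteq E_{[\ell]}$. Since $\theta_{F_{[\ell]}}$ is the unique ID-extension of $\theta_F$ to $F_{[\ell]}$ and every iterative derivation lifts uniquely to $E_{[\ell]}$, the matrix $Y$ satisfies $\theta_{F_{[\ell]}}(Y)=AY$ over $F_{[\ell]}$ (the integrability conditions on $A$ are preserved since the $A_{\vect{k}}$ already live in $F$). The key observation for the remaining PPV-conditions is the inclusion $\tR^{p^\ell}\subseteq R$, which holds because $F_{[\ell]}^{p^\ell}=F_\ell\subseteq F$ and $Y^{p^\ell}\in R$. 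For ID-simplicity: if $I\ideal\tR$ is a nonzero ID-ideal and $0\neq x\in I$, then $x^{p^\ell}\in I\cap R$ is nonzero; moreover $I\cap R$ is an ID-ideal of $R$ (since $\theta_{F_{[\ell]}}$ agrees with $\theta_F$ on $R$), so ID-simplicity of $R$ forces $I\cap R=R$ and hence $I=\tR$. For the constants: if $\theta_{F_{[\ell]}}^{(\vect{k})}(c)=0$ for all $\vect{k}\neq\vect{0}$, then $\theta_F^{(p^\ell\vect{k})}(c^{p^\ell})=(\theta_{F_{[\ell]}}^{(\vect{k})}(c))^{p^\ell}=0$, while $\theta_F^{(\vect{j})}(c^{p^\ell})=0$ automatically for $\vect{j}$ not a componentwise multiple of $p^\ell$ (as $c^{p^\ell}$ is a $p^\ell$-th power); hence $c^{p^\ell}\in C_R=K$, and perfectness of $K$ yields $c\in K$.

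For the quotient field, $\Quot(\tR)=F_{[\ell]}\cdot E$ is a purely inseparable ID-extension of $E$ of exponent at most $\ell$, so by the maximality asserted in Proposition~\ref{max-id-extension} (applied to $E$) it is contained in $E_{[\ell]}$; the reverse inclusion follows from computing $(F_{[\ell]}\cdot E)^{p^\ell}=F_\ell\cdot E^{p^\ell}=F_\ell(Y^{p^\ell})=E_\ell=(E_{[\ell]})^{p^\ell}$ using $E=F(Y)$, which by purely inseparable degree comparison forces $F_{[\ell]}\cdot E=E_{[\ell]}$. Thus $E_{[\ell]}/F_{[\ell]}$ is a PPV-extension with PPV-ring $\tR$.

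For the Galois-group relation, set $\G:=\uGal(E/F)$ and $\G':=\uGal(E_{[\ell]}/F_{[\ell]})$, so that $K[\G]=C_{R\otimes_F R}$ and $K[\G']=C_{\tR\otimes_{F_{[\ell]}}\tR}$; both Hopf algebras are generated by the entries of the Galois-coordinate matrix $(Y\otimes 1)^{-1}(1\otimes Y)$ in their respective tensor squares. Applying $x\mapsto x^{p^\ell}$ entrywise defines a ring homomorphism $\tR\otimes_{F_{[\ell]}}\tR\to R\otimes_F R$, and restricting to constants yields a ring map $K[\G']\to K[\G]$ which is $p^\ell$-th-powering on $K$-scalars, hence Frobenius-semilinear. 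Passing through the base change along ${\bf Frob}^\ell$ converts this into a $K$-algebra isomorphism $K[\G']\otimes_{K,{\bf Frob}^\ell}K\isom K[\G]$, i.e.\ $({\bf Frob}^\ell)^*\G'\isom\G$. The main obstacle will be precisely this Frobenius bookkeeping: one must carefully distinguish the two $K$-algebra structures on the Hopf algebras, since the abstract ring map underlying the comparison is not $K$-linear but precisely twists scalars through ${\bf Frob}^\ell$.
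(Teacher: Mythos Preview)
Your construction of $\tR=F_{[\ell]}[Y,\det(Y)^{-1}]$ is a genuine PPV-ring over $F_{[\ell]}$ (the ID-simplicity and constants arguments via $p^\ell$-th powers are fine), but its quotient field is $F_{[\ell]}\cdot E$, and this is \emph{not} $E_{[\ell]}$ in general. The asserted equality $F_\ell(Y^{p^\ell})=E_\ell$ is where the argument breaks: you only have $F_\ell\cdot E^{p^\ell}\subseteq E_\ell$, and the inclusion can be strict. Concretely, take $m=1$, $F=K(t,s)$ as in the paper's third example (so $F_{[1]}=K(t,s')$ with $(s')^p=s-a_0t$), and let $E=F(u)$ be the $\GG_a$-PPV-extension with $\theta^{(p^k)}(u)=b_k\in\FF_p$ and fundamental matrix $Y=\bigl(\begin{smallmatrix}1&u\\0&1\end{smallmatrix}\bigr)$. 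Then $E_1=K(t^p,(s')^p,(u')^p)$ with $(u')^p=u-b_0t$, whereas $F_1(Y^p)=K(t^p,(s')^p,u^p)$; the element $u-b_0t$ is not in the latter field (it is not a $p$-th power in $K(t,s',u)$ since $t,u$ are $p$-independent there). Hence $F_{[1]}\cdot E=K(t,s',u)\subsetneq E_{[1]}=K(t,s',u')$, and you have produced the wrong PPV-extension.

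The paper's proof avoids this by working downward before working upward: using non-degeneracy (Remark~\ref{rem-on-IDs}) one has $[F:F_\ell]=[E:E_\ell]=p^{m\ell}$, so $F$ and $E_\ell$ are linearly disjoint over $F_\ell$ and one can choose $D\in\GL_n(F)$ with $\tilde{Y}:=D^{-1}Y\in\GL_n(R_\ell)$. One then checks that the gauge-transformed matrix $\tilde{A}=\theta(D)^{-1}AD$ actually lies in $\GL_n(F_\ell[[\vect{T}^{p^\ell}]])$, so $R_\ell$ is a PPV-ring over $F_\ell$; taking $p^\ell$-th roots gives the PPV-ring $R_{[\ell]}$ over $F_{[\ell]}$ with quotient field exactly $E_{[\ell]}$. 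In the example above this produces the fundamental matrix $\bigl(\begin{smallmatrix}1&u'\\0&1\end{smallmatrix}\bigr)$, as it should. Your Frobenius bookkeeping for the Galois groups is essentially right, but it must be applied to $R_{[\ell]}$ rather than to your $\tR$; the paper factors the comparison as $\uGal(R/F)\cong\uGal(R_\ell/F_\ell)$ (from linear disjointness) followed by the Frobenius twist $\uGal(R_\ell/F_\ell)\cong({\bf Frob}^\ell)^*\uGal(R_{[\ell]}/F_{[\ell]})$.
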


\begin{proof}
Let $R\subseteq E$ be the corresponding PPV-ring and $Y\in\GL_n(R)$ a
fundamental solution matrix for  a corresponding IDE
$\theta(\vect{y})=A\vect{y}$. Since the $m$-variate iterative
derivation is non-degenerate on $F$, on has
$[F:F_\ell]=p^{m\ell}=[E:E_\ell]$.
Hence, there is a matrix $D \in \GL_n(F)$ such that
$\tilde{Y}:=D^{-1}Y\in \GL_n(R_\ell)$. The matrix $\tilde{Y}$
satisfies
$$\theta(\tilde{Y})=\theta(D^{-1}Y)=\theta(D)^{-1}AD\tilde{Y},$$
i.\,e. it is a fundamental solution matrix for the IDE
$\theta(\vect{y})=\tilde{A}\vect{y}$, where
$\tilde{A}=\theta(D)^{-1}AD\in\GL_n(F[[\vect{T}]])$.

We first show that $\tilde{A}\in\GL_n(F_\ell[[T_1^{p^\ell},\dots, T_m^{p^\ell}]])$:
Clearly $\tilde{A}\in\GL_n(F[[\vect{T}^{p^\ell}]])$, since
$\theta^{(\vect{k})}(\tilde{Y})=0$ for all
$\vect{k}\in J_\ell$ and since $\theta$ is
iterative.
Then for all $\vect{j}\in\NN^m$ and all $\vect{k}\in J_\ell$ we have
$$\theta^{(\vect{k})}\left(\theta^{(\vect{j})}(\tilde{Y})\right)=
\theta^{(\vect{j})}\left(\theta^{(\vect{k})}(\tilde{Y})\right)=0,$$
and
$$\theta^{(\vect{k})}\left(\theta^{(\vect{j})}(\tilde{Y})\right)=\theta^{(\vect{k})}\left(\tilde{A}_{\vect{j}}\cdot
\tilde{Y}\right)=
\theta^{(\vect{k})}(\tilde{A}_{\vect{j}})\tilde{Y}.$$
Hence, $\theta^{(\vect{k})}(\tilde{A}_{\vect{j}})=0$. Therefore
$\tilde{A}_{\vect{j}}$ has coefficients in $F_\ell$.

Since $\tilde{A}\in\GL_n(F_\ell[[\vect{T}^{p^\ell}]])$,
$R_\ell$ is actually a PPV-ring over $F_\ell$ with fundamental solution matrix $\tilde{Y}$.

\comment{\bf !! erwaehnen weshalb $R_\ell$ ID-simple?}

By taking $p^{\ell}$-th roots, we obtain that $R_{[\ell]}$ is a
PPV-ring over $F_{[\ell]}$ with fundamental solution matrix $\left((\tilde{Y}_{i,j})^{p^{-\ell}}\right)_{i,j}$.

For obtaining the relation between the Galois groups, we first observe that
$F$ and $R_\ell$ are linearly disjoint over $F_\ell$ and hence
$F\otimes_{F_\ell} R_\ell\isom R$, 
which induces a natural isomorphism of the Galois groups
$\uGal(R/F)\isom \uGal(R_\ell/F_\ell)$.

Furthermore the $p^\ell$-th power Frobenius endomorphism leads to an
isomorphism
$$R_{[\ell]}\otimes_{F_{[\ell]}} R_{[\ell]} \xrightarrow{()^{p^\ell}}
R_\ell\otimes_{F_\ell} R_\ell.$$
Since $\uGal(R_\ell/F_\ell)$ (resp.$\uGal(R_{[\ell]}/F_{[\ell]})$ is isomorphic as $K$-group scheme to
$\Spec(C_{R_\ell\otimes_{F_\ell} R_\ell})$ (resp.
$\Spec(C_{R_{[\ell]}\otimes_{F_{[\ell]}} R_{[\ell]}})$), this gives
the desired property 
$$({\bf Frob}^\ell)^{*}\left(\uGal(E_{[\ell]}/F_{[\ell]})\right)\isom \uGal(E_\ell/F_\ell)
\isom\uGal(E/F).$$
\end{proof}

From this theorem we obtain a criterion for $E_{[\ell]}/E$ being a
PPV-extension.

\begin{cor}\label{E_ell-is-ppv}
Let $E/F$ be a PPV-extension and suppose that $F_1=F^p$. Then the
extension $E_{[\ell]}/E$ is a PPV-extension, for all $\ell\in\NN$.
\end{cor}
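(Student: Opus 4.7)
The plan is to apply the previous theorem to $E/F$ and then descend the resulting PPV-structure from $F$ to $E$. By Proposition \ref{max-id-extension}(iii), the hypothesis $F_1=F^p$ is equivalent to $F_{[1]}=F$, and hence forces $F_{[\ell]}=F$ for every $\ell\in\NN$. The previous theorem then tells us that $E_{[\ell]}/F$ is itself a PPV-extension, with PPV-ring $R_{[\ell]}$ and fundamental solution matrix $Z:=(\tilde Y_{ij}^{p^{-\ell}})_{ij}\in\GL_n(R_{[\ell]})$.

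I would next make the corresponding IDE over $F$ explicit. Since the coefficients of $\tilde A$ lie in $F_\ell=F^{p^\ell}$ (using $F_{[\ell]}=F$ once again), each coefficient has a unique $p^\ell$-th root in $F$, and assembling these produces a matrix $\hat A\in\GL_n(F[[\vect{T}]])$ with $\hat A^{p^\ell}=\tilde A$. A short computation with the defining formula $\theta_{R_{[\ell]}}(x)=\theta_R(x^{p^\ell})^{p^{-\ell}}$ and the injectivity of the $p^\ell$-th power Frobenius on the domain $R_{[\ell]}$ shows that $Z$ is a fundamental solution matrix for $\theta(\vect{y})=\hat A\vect{y}$ over $F$, and a fortiori over $E$.

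Now let $S$ be the $E$-subalgebra of $E_{[\ell]}$ generated by the entries of $Z$ and $\det(Z)^{-1}$; I claim that $S$ is a PPV-ring over $E$ for this IDE. Its entries are purely inseparable over $F$, so $S$ is a finite-dimensional $E$-subalgebra of the field $E_{[\ell]}$, hence itself a field, and in particular ID-simple. It contains $R_{[\ell]}$, so $S\supseteq\Quot(R_{[\ell]})=E_{[\ell]}$, giving $S=E_{[\ell]}$. The constants equality $C_S=C_E=K$ follows from perfectness of $K$: any $z\in C_{E_{[\ell]}}$ satisfies $z^{p^\ell}\in C_E=K$, whence $z\in K$. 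All four defining conditions of a PPV-ring over $E$ are then met, and $E_{[\ell]}/E$ is a PPV-extension.

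The main obstacle---and the only place where $F_1=F^p$ is genuinely used---is the step ensuring that $\hat A$ has coefficients in $F$ rather than only in $F_{[\ell]}$, for otherwise the IDE would not be defined over the ground field $E\supseteq F$. Once this is secured by Proposition \ref{max-id-extension}(iii), the rest is essentially a repackaging of the previous theorem together with the basic algebra of finite-dimensional $E$-subalgebras of a field.
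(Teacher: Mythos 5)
Your argument follows the paper's proof: deduce $F_{[\ell]}=F$ from $F_1=F^p$, apply the preceding theorem to conclude that $E_{[\ell]}/F$ is a PPV-extension, and infer that $E_{[\ell]}/E$ is one as well; you merely make explicit the last implication, which the paper leaves as a one-word ``therefore''. One small slip: the entries of $Z$ are purely inseparable over $E$ (since $Z_{ij}^{p^\ell}=\tilde Y_{ij}\in E$), not over $F$ in general, but that is exactly what your finite-dimensionality argument needs, so the proof stands.
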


\begin{proof}
From $F_1=F^p$, it follows that $F_{[\ell]}=F$ for all $\ell$. Hence
by the previous theorem, $E_{[\ell]}/F$ is a PPV-extension and
therefore $E_{[\ell]}/E$ is a PPV-extension.
\end{proof}

\begin{prop}\label{finite-id-ext}
Let $E$ be a finite ID-extension of some ID-field $F$ with $C_E=K$.
Then there is a finite field extension $L$ over $K$ such that $E$ is
contained in a PPV-extension of $FL=F\otimes_K L$.  
\end{prop}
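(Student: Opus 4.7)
The idea is to realize $E$ as sitting inside a PPV-extension by attaching a natural IDE to the finite ID-module structure on $E$ and then invoking the general existence of PPVs after a finite constant extension.

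Since $E/F$ is finite, every element of $E$ is differentially finite over $F$ (its iterative derivatives all lie in the finite-dimensional $F$-space $E$). Pick an $F$-basis $\vect{e} = (e_1, \dots, e_n)^T$ of $E$ and expand $\theta(e_j) = \sum_i A_{ij}(\vect{T})\, e_i$ with coefficients $A_{ij}(\vect{T}) \in F[[\vect{T}]]$. Setting $A = (A_{ij})$ one checks $A_{\vect{0}} = \mathds{1}_n$ (from $\theta^{(\vect{0})} = \id$), so $A \in \GL_n(F[[\vect{T}]])$, and the cocycle identity defining an IDE follows immediately from the iteration rule for $\theta$. Thus $\theta(\vect{y}) = A \vect{y}$ is a bona fide IDE over $F$, and $\vect{e}$ is a solution in $E^n$.

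Next I would invoke the existence of PPV-extensions for arbitrary IDEs from \cite{maurischat}: there exist a finite extension $L/K$ and a PPV-ring $R/FL$ for this IDE with fundamental solution matrix $Y \in \GL_n(R)$. The extension $L/K$ arises by absorbing the new constants that appear when one passes from the universal solution algebra $FL[\vect{Y}, \det(\vect{Y})^{-1}]$ to its quotient by a maximal ID-ideal whose constants match those of the base; these new constants are algebraic over $K$, and perfectness of $K$ together with the finiteness built into the construction keeps $L/K$ finite.

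Finally, to embed $E$ into the PPV-field $N = \Quot(R)$, I would compare the two solutions $\vect{e}$ and the columns of $Y$ inside a common ID-overring. Enlarging $L$ once more if necessary, a maximal ID-ideal of $E \otimes_F R$ yields an ID-field containing both $E$ and $R$ with constants inside $L$, in which $\vect{e} = Y c$ for some $c \in L^n$ by linear independence of solutions over constants; this gives $e_i = \sum_j Y_{ij} c_j \in R$, hence $E \hookrightarrow N$. The main technical obstacle is controlling the constants of $E \otimes_F R$ well enough to keep the required extension of $K$ finite — this is precisely where perfectness of $K$ together with finiteness of $E/F$ combine to force the new constants into a finite extension, and so ultimately explains the appearance of $L$ in the statement.
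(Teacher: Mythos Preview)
Your setup is the same as the paper's: choose an $F$-basis $e_1,\dots,e_n$ of $E$ and read off an IDE $\theta(\vect{y})=A\vect{y}$ from the action of $\theta$ on that basis. From there, however, the paper proceeds more directly than you do. Instead of first producing an abstract PPV-ring $R/FL$ for this IDE and then trying to embed $E$ into it, the paper forms the universal solution ring \emph{over $E$}, namely $U=E[X_{ij},\det(X)^{-1}]$ with $\theta_U(X)=AX$, observes that $(x_{11}-e_1,\dots,x_{n1}-e_n)$ is an ID-ideal of $U$, and extends it to a maximal ID-ideal $P$. Then $U/P$ is ID-simple, contains $E$ by construction (the first column of the image of $X$ is $\vect{e}$), and is a PPV-ring over $FL$ for $L:=C_{U/P}$.

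Your route is not wrong, but it is more roundabout: having built $R$, you still need to realise $E$ inside a PPV-field, which you do by passing to a maximal ID-ideal of $E\otimes_F R$ and arguing that $\vect{e}=Yc$ there for constant $c$. This forces a second enlargement of $L$ and an extra check that the resulting quotient is still a PPV-ring over the new base (your line ``$e_i=\sum_j Y_{ij}c_j\in R$'' is literally false before that enlargement, since $c\in L'{}^n$ rather than $L^n$). All of this can be made to work, but the paper's device of taking the universal solution ring over $E$ and imposing the ideal $(x_{i1}-e_i)$ short-circuits the entire embedding step: $E$ sits inside the PPV-ring from the outset, and only one passage to a maximal ID-ideal (hence one finite constant extension $L/K$) is needed.
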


\begin{proof}
Let $e_1,\dots, e_n\in E$ be an $F$-basis of $E$. Then there are unique
$A_{\vect{k}}\in F^{n\times n}$, such that $\theta_E^{(\vect{k})}(e_i)=\sum_{j=1}^n
(A_{\vect{k}})_{ij}e_j$ for all $\vect{k}\in\NN^m$ and $i=1,\dots, n$.
Since the $A_{\vect{k}}$ are unique, the property of $\theta_E$ being an
iterative derivation implies that $\theta(\vect{y})=A\vect{y}$ is an
iterative differential equation, where $A=\sum_{\vect{k}\in\NN^m}
A_{\vect{k}} \vect{T}^{\vect{k}}\in \GL_n(F[[\vect{T}]])$.
Let $U:=E[X_{ij},\det(X)^{-1}]$ be the universal solution ring for
this IDE over $E$ (i.\,e. $\theta_U(X)=A X$). Then the ideal
$(x_{11}-e_1,x_{21}-e_2,\dots, x_{n1}-e_n)\ideal U$ is an ID-ideal and
there is a maximal ID-ideal $P$ containing $(x_{11}-e_1,\dots,
x_{n1}-e_n)$. Then the field of constants $L:=C_{U/P}$ of $U/P$ is a
finite field extension of $K$ and by construction $U/P$ is a
PPV-extension of $FL$ which contains $E$.
\end{proof}

\begin{thm}\label{general-realisation}
Let $F$ be an ID-field with $C_F=K$ perfect.\\
Let $\tilde{C}_{\ell}$ denote  the
maximal subalgebra of $C_{F_{[\ell]}\otimes_F F_{[\ell]}}$ which is a
Hopf algebra with respect to the comultiplication induced by 
$$F_{[\ell]}\otimes_F F_{[\ell]}\longrightarrow
\left(F_{[\ell]}\otimes_F F_{[\ell]}\right)\otimes_{F_{[\ell]}}\left(
F_{[\ell]}\otimes_F F_{[\ell]}\right), a\otimes b\mapsto (a\otimes
1)\otimes (1\otimes b).$$

Then an infinitesimal group scheme of height $\leq \ell$ is realisable as
ID-Galois group scheme over $F$, if and only if it is a factor group
of $\Spec(\tilde{C}_{\ell})$.
\end{thm}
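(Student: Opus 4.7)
The plan is to prove the two directions of the equivalence separately; the converse is the substantive part.

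\textbf{Necessity.} Suppose $\G$ is realised as $\uGal(E/F)$ for a PPV-extension $E/F$ with PPV-ring $R$. Since $\G$ is infinitesimal of height $\leq\ell$, Corollary \ref{infinitesimal_group} forces $E/F$ to be purely inseparable of exponent $\leq\ell$, so by Proposition \ref{max-id-extension}(\ref{leq-ell}) we have $E\subseteq F_{[\ell]}$. Because $E/F$ is finite, the PPV-ring $R$ is a finite-dimensional $F$-algebra that is a domain, hence a field, and $R=E\subseteq F_{[\ell]}$. By flatness of field extensions, the natural $F$-algebra map $R\otimes_F R\hookrightarrow F_{[\ell]}\otimes_F F_{[\ell]}$ is injective and ID-equivariant, hence restricts on constants to an embedding $K[\G]=C_{R\otimes_F R}\hookrightarrow C_{F_{[\ell]}\otimes_F F_{[\ell]}}$. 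As recalled in Section \ref{galois-theory}, the comultiplication of $K[\G]$ is induced by the very same rule $a\otimes b\mapsto(a\otimes 1)\otimes(1\otimes b)$ used to define $\tilde{C}_\ell$. So the image is a Hopf subalgebra, which by maximality lies in $\tilde{C}_\ell$. Dualising yields the desired quotient morphism $\Spec(\tilde{C}_\ell)\twoheadrightarrow\G$.

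\textbf{Sufficiency.} Conversely, given a Hopf-subalgebra inclusion $K[\G]\hookrightarrow\tilde{C}_\ell\subseteq F_{[\ell]}\otimes_F F_{[\ell]}$, I would construct a sub-$F$-algebra $R\subseteq F_{[\ell]}$ witnessing $\G$ as a Galois group. A natural candidate is
$$R:=\{\, a\in F_{[\ell]}\mid 1\otimes a\in (F_{[\ell]}\otimes 1)\cdot K[\G]\ \text{inside }F_{[\ell]}\otimes_F F_{[\ell]}\,\}.$$
One then verifies, in order: (a) $R$ is an $F$-subalgebra of $F_{[\ell]}$ stable under $\theta$; (b) the $F_{[\ell]}$-linear multiplication gives an isomorphism $(F_{[\ell]}\otimes 1)\cdot K[\G]\cong F_{[\ell]}\otimes_K K[\G]$ whose restriction to $R$ identifies $(R\otimes 1)\cdot K[\G]$ with $R\otimes_K K[\G]$; (c) the natural map $R\otimes_F R\to F_{[\ell]}\otimes_F F_{[\ell]}$ factors through an ID-isomorphism $R\otimes_F R\cong R\otimes_K K[\G]$, so $C_{R\otimes_F R}=K[\G]$; (d) $R$ is ID-simple with $C_R=K$. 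Given (a)--(d), choosing an $F$-basis of $R$ produces an IDE for which $R$ is a PPV-ring, and the isomorphism in (c) identifies its Galois group scheme with $\Spec K[\G]=\G$.

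\textbf{Main obstacle.} The difficulty concentrates in the sufficiency part: checking that the candidate $R$ is \emph{large} enough to fulfil (c) and ID-simple as required by (d). Both rely crucially on the Hopf-algebra structure of $K[\G]$ and the maximality of $\tilde{C}_\ell$. A cleaner route I would try first is to reduce to the special case where $F_{[\ell]}/F$ is itself a PPV-extension, i.e.\ $\tilde{C}_\ell$ equals the full ring $C_{F_{[\ell]}\otimes_F F_{[\ell]}}$; there the Galois correspondence (Theorem \ref{galois_correspondence}(\ref{normal_subgroup})) applied to the normal subgroup scheme $\N:=\ker\bigl(\Spec(\tilde{C}_\ell)\twoheadrightarrow\G\bigr)$ gives immediately a PPV-ring $R=F_{[\ell]}^{\N}$ with Galois group $\Spec(\tilde{C}_\ell)/\N\cong\G$. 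The general case should follow by first passing from $F_{[\ell]}$ to the maximal PPV-sub-extension corresponding to $\tilde{C}_\ell$ and then applying the special case.
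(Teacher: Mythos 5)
Your necessity argument is correct and is essentially the paper's: Corollary \ref{infinitesimal_group} and Proposition \ref{max-id-extension} force $E\subseteq F_{[\ell]}$, and the induced embedding of Hopf algebras $K[\G]\isom C_{E\otimes_F E}\hookrightarrow C_{F_{[\ell]}\otimes_F F_{[\ell]}}$ lands in $\tilde{C}_\ell$ by maximality. The sufficiency direction, however, has a genuine gap, and you have in effect flagged it yourself. Your direct construction of $R$ stops exactly where the difficulty begins: nothing in the proposal establishes step (c), that $R\otimes_F R\to R\otimes_K K[\G]$ is an isomorphism --- the Hopf-subalgebra hypothesis alone gives no control on dimensions, so surjectivity (equivalently, the torsor condition that makes $R$ a PPV-ring at all) is unproved --- nor step (d). Worse, your ``cleaner route'' is circular: the ``maximal PPV-sub-extension of $F_{[\ell]}$ corresponding to $\tilde{C}_\ell$'' that you want to reduce to is precisely a PPV-extension of $F$ with Galois group $\Spec(\tilde{C}_\ell)$, i.e.\ the instance $\tilde{\G}=\Spec(\tilde{C}_\ell)$ of the very theorem being proved. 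There is no a priori reason such a subextension exists, and the paper's second example shows the situation can be extreme: $F_{[\ell]}\gneq F$ while $F$ admits no purely inseparable PPV-extensions at all, so ``$F_{[\ell]}/F$ is itself a PPV-extension'' is far from the generic case.

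The idea you are missing is the paper's actual mechanism: embed $F_{[\ell]}$ into some PPV-extension $E/F$ which need \emph{not} be purely inseparable. This uses Proposition \ref{finite-id-ext} and in general requires passing to a finite extension $K'$ of the constants. Once $F_{[\ell]}\subseteq E$ with PPV-ring $R$, one gets $\tilde{C}_\ell\subseteq C_{F_{[\ell]}\otimes_F F_{[\ell]}}\subseteq C_{R\otimes_F R}\isom K[\uGal(E/F)]$ as a sub-Hopf algebra, so $\Spec(\tilde{C}_\ell)$, and hence any factor group $\tilde{\G}$ of it, is a factor group of $\uGal(E/F)$; Theorem \ref{galois_correspondence} then realises $\tilde{\G}$ as $\uGal(E^{\G'}/F)$ for the corresponding normal subgroup scheme $\G'$. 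When the constants had to be enlarged, a further Galois-descent step is needed: the realisation over $FK'$ is purely inseparable, hence defined over $F$, and a comparison of dimensions shows $\tF\otimes_K K[\tilde{\G}]\to\tF\otimes_F\tF$ is an isomorphism, whence $\tF/F$ is a PPV-extension with group $\tilde{\G}$. Without this embedding-plus-descent machinery the sufficiency direction is not established.
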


\begin{proof}
Let $\tilde{\G}$ be an infinitesimal group scheme of height $\leq \ell$
which is realisable as Galois group scheme over $F$ and let $F'/F$ be
an extension with Galois group scheme $\tilde{\G}$. By Cor.
\ref{infinitesimal_group} and Prop. \ref{max-id-extension}, $F'$ is an
ID-subfield of $F_{[\ell]}$. Therefore,
$K[\tilde{\G}]\isom C_{F'\otimes_F F'}$ is a subalgebra of $C_{F_{[\ell]}\otimes_F
  F_{[\ell]}}$ and is a Hopf algebra with comultiplication as
given in the statement. Hence it is a sub-Hopf algebra of
$\tilde{C}_{\ell}$ and so $\tilde{\G}$ is a factor group of $\Spec(\tilde{C}_{\ell})$.

For the converse,
we first assume that there is a PPV-extension $E/F$ such that
$E\supseteq F_{[\ell]}$. Let $R$ denote the corresponding PPV-ring and
$\G:=\uGal(E/F)$ the Galois group scheme. Since $F_{[\ell]}$ is an
intermediate ID-field, there is a subgroup $\H\leq \G$ such that
$F_{[\ell]}=E^{\H}$. Since all elements in $F_{[\ell]}$ are
differentially finite over $F$ we even have $F_{[\ell]}=R^{\H}$.
Then $\tilde{C}_{\ell}\subseteq C_{F_{[\ell]}\otimes_F
  F_{[\ell]}}\subseteq C_{R\otimes_F R}\isom K[\G]$ is a sub-Hopf
algebra, i.\,e. $\Spec(\tilde{C}_{\ell})$ is a factor group of $\G$.

If $\tilde{\G}$ is a factor group of $\Spec(\tilde{C}_{\ell})$ then it
is a factor group of $\G$ and therefore there is a normal subgroup
$\G'\ideal \G$ such that $\tilde{\G}\isom \G/\G'$. Then by the
Galois correspondence, $\tilde{F}:=E^{\G'}$ is a PPV-extension of $F$ with
Galois group scheme $\tilde{\G}$.

If there is no PPV-extension $E/F$ containing $F_{[\ell]}$, then by
Prop. \ref{finite-id-ext}, there is a finite Galois extension $K'$ of
$K$ such that there 
is a PPV-extension $E'/FK'$ containing $F_{[\ell]}K'$. By the
previous arguments there is a PPV-field $F'$ over $FK'$ with Galois
group $\tilde{\G}\times_K K'$. Since $F'$ is a purely inseparable
extension of $FK'$, it is defined over $F$, i.\,e. there is an
ID-field $\tilde{F}/F$ such that $F'=\tF\otimes_K K'$. 
Since $\Gal(K'/K)$ acts on $F'=\tF K'$ by ID-automorphisms, the
constants of $\tF\otimes_F \tF\isom (F'\otimes_F
\tF)^{\Gal(K'/K)}\isom (F'\otimes_{FK'} F')^{\Gal(K'/K)}$ are equal to
the $\Gal(K'/K)$-invariants of $C_{F'\otimes_{FK'} F'}\isom
K'[\tilde{\G}]$ inside $C_{F_{[\ell]}\otimes_F
  F_{[\ell]}}K'$, \ie{} are equal to $K[\tilde{\G}]$. By comparing
dimensions, one obtains that the $\tF$-linear mapping $\tF\otimes_K
K[\tilde{\G}]\to \tF\otimes_F \tF$ is in fact an isomorphism, and
hence by \cite{maurischat}, Prop. 10.12, $\tF/F$ is a PPV-extension with
Galois group scheme $\tilde{\G}$.
\end{proof}

\begin{cor}\label{special-realisation}
Let $E$ be an ID-field and suppose that $E$ is a PPV-extension of
some ID-field $F$ satisfying $F_1=F^p$.
An infinitesimal group scheme of height $\leq \ell$ is realisable as
ID-Galois group scheme over $E$, if and only if it is a factor group
of $\uGal(E_{[\ell]}/E)$.
\end{cor}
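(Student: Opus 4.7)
The plan is to apply Theorem \ref{general-realisation} with $E$ in place of $F$, and then to identify the maximal sub-Hopf-algebra appearing in that theorem with $K[\uGal(E_{[\ell]}/E)]$.

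First I would verify that the hypotheses of Theorem \ref{general-realisation} transfer from $F$ to $E$. Since $E/F$ is a PPV-extension one has $C_E=C_F=K$, which is perfect. For non-degeneracy of $\theta$ on $E$ I would invoke the criterion in Remark \ref{rem-on-IDs}: any $x\in F$ witnessing non-triviality of $\theta_j^{(1)}$ on $\bigcap_{i<j}\Ker(\theta_i^{(1)})$ also works inside $E$, so $\theta_E$ is non-degenerate.

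Next, from $F_1=F^p$, Corollary \ref{E_ell-is-ppv} tells us that $E_{[\ell]}/E$ is itself a PPV-extension. The key step is then to show that its PPV-ring equals $E_{[\ell]}$ itself. By definition of a PPV-ring, $R'\subseteq E_{[\ell]}$ is a finitely generated $E$-subalgebra with $\Quot(R')=E_{[\ell]}$. Every element of $E_{[\ell]}$ satisfies $y^{p^\ell}\in E$, so $R'$ is integral over $E$; being finitely generated and integral, $R'$ is a finite $E$-algebra, and as a domain over a field it is itself a field. Hence $R'=E_{[\ell]}$.

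Consequently the canonical isomorphism $\gamma$ of the PPV-theory identifies $K[\uGal(E_{[\ell]}/E)]$ with $C_{E_{[\ell]}\otimes_E E_{[\ell]}}$ as Hopf algebras, carrying exactly the comultiplication appearing in Theorem \ref{general-realisation}. The maximal sub-Hopf-algebra (the analogue of $\tilde C_\ell$ for $E$) is therefore the entire ring $C_{E_{[\ell]}\otimes_E E_{[\ell]}}$, and its spectrum is $\uGal(E_{[\ell]}/E)$. Substituting this into Theorem \ref{general-realisation} applied to $E$ yields the corollary. The only substantive point is the identification $R'=E_{[\ell]}$; everything else is a direct translation via the PPV-theoretic machinery already set up.
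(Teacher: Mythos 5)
Your proposal is correct and follows the same route as the paper: apply Theorem \ref{general-realisation} with $E$ as the base field and use Corollary \ref{E_ell-is-ppv} to see that $E_{[\ell]}/E$ is a PPV-extension, so that the maximal sub-Hopf-algebra of $C_{E_{[\ell]}\otimes_E E_{[\ell]}}$ is the whole ring and its spectrum is $\uGal(E_{[\ell]}/E)$. The paper leaves these identifications implicit; you have merely spelled out the details (in particular that the PPV-ring of $E_{[\ell]}/E$ is all of $E_{[\ell]}$), and your verifications are sound.
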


\begin{proof}
This follows directly from Theorem \ref{general-realisation} and the
fact that in this case $E_{[\ell]}/E$ is a PPV-extension by Corollary
\ref{E_ell-is-ppv}.
\end{proof}

\section{Examples}

In this section we consider some examples. Troughout this section $K$
denotes a perfect field of characteristic $p>0$ and $K((t))$ is
equipped with the univariate iterative derivation $\theta$ given by
$\theta(t)=t+T$.

\begin{exam}
We start with the easiest case. If $F=K(t)$ or $F$ is a finite
ID-extension of $K(t)$ inside $K((t))$, then $F_1=F^p$,
i.\,e. $F_{[1]}=F$, and therefore by Prop. \ref{max-id-extension},
there exist no purely inseparable ID-extensions of $F$.
For $F=K(t)$, the property $F_1=F^p$ is obvious,  and for $F$ being a finite
extension of $K(t)$, it is obtained by a simple dimension argument.
\end{exam}

\begin{exam}
We present an example for an ID-field $F$ with $F_{[\ell]}\gneq F$
which nevertheless has no purely inseparable PPV-extensions. More
precisely, we show that the constants of $F_{[\ell]}\otimes_F
F_{[\ell]}$ are equal to $K=C_F$ for all $\ell\in\NN$.

\smallskip

Let $\alpha\in \ZZ_p\setminus \QQ$ be a $p$-adic integer, and for all
$k\in \NN$, let $\alpha_k\in \{0,\dots, p^k-1\}$ be chosen such that
$\alpha\congr \alpha_k \mod{p^k}$. Then we define
$r:=\sum_{k=1}^\infty t^{\alpha_k} \in K[[t]]$.
The field $F:=K(t,r)$ is then an ID-subfield of $K((t))$, since for
all $j\in\NN$, 
\begin{eqnarray*}
\theta^{(p^j)}(r)&=& \sum_{k=1}^\infty
\theta^{(p^j)}\left(t^{\alpha_k}\right)
= \sum_{k=1}^\infty \binom{\alpha_k}{p^j} t^{\alpha_k-p^j}\\
&=& \binom{\alpha_{j+1}}{p^j}  t^{-p^j}\sum_{k=j+1}^\infty t^{\alpha_k}
= \binom{\alpha_{j+1}}{p^j}  t^{-p^j}\left( r- \sum_{k=1}^j
t^{\alpha_k}\right)\in K(t,r).
\end{eqnarray*}
Here we used that $\binom{a}{p^j}=0$ if $a<p^j$ and
$\binom{a}{p^j}\congr \binom{b}{p^j} \mod{p}$ if $a\congr b \mod{p^{j+1}}$.

\smallskip

We will show now that $r$ is transcendental over $K(t)$:

Let $s$ be a solution for the $1$-dimensional IDE
$\theta^{(p^j)}(y)=\binom{\alpha_{j+1}}{p^j}t^{-p^j}y$ ($j\in\NN$) in
some extension field of $F$. Since $\alpha\not\in \QQ$, the element
$s$ is transcendental over $K(t)$ by \cite{mat_hart}, Thm. 3.13.
One then easily verifies 
$$\theta^{(p^j)}\begin{pmatrix} s& r\\ 0&1\end{pmatrix}=
\begin{pmatrix} \binom{\alpha_{j+1}}{p^j}t^{-p^j} & -\binom{\alpha_{j+1}}{p^j} \sum_{k=1}^j
  t^{\alpha_k-p^j} \\ 0&0\end{pmatrix} \cdot \begin{pmatrix} s&
    r\\ 0&1\end{pmatrix},$$
which shows that $K(t,r,s)$ is a PPV-field over $K(t)$ with Galois
group inside $\GG_m\ltimes \GG_a\isom \{\left(\begin{smallmatrix}
  x&a\\ 0&1\end{smallmatrix}\right)\in \GL_2\}$.

Since $s$ is transcendental over $K(t)$, the full subgroup $\GG_m$ is
contained in the Galois group. The only subgroups of $\GG_a$ which are
stable under the $\GG_m$-action are the Frobenius kernels
$\alpha_{p^m}$. But all Galois groups over $K(t)$ are reduced
(cf. \cite{maurischat}, Cor. 11.7), and
hence we have $\uGal(K(t,r,s)/K(t))=\GG_m\ltimes \GG_a$ or
$=\GG_m$. In both cases $K(t,r,s)$ contains no elements that are
algebraic over $K(t)$. Since the power series of $r$ does not become
eventually periodic, $r\not\in K(t)$ and so $r$ has to be
transcendental over $K(t)$.

Next we are going to calculate the constants of $F_{[\ell]}\otimes_F
F_{[\ell]}$:

It is easily seen that $F_{[\ell]}=K(t,r_{[\ell]})$, where
$$r_{[\ell]}:= \left( t^{-\alpha_\ell}(r-\sum_{k=1}^\ell
t^{\alpha_k})\right)^{p^{-\ell}}= \sum_{k=1}^\infty
t^{(\alpha_{k+\ell}-\alpha_\ell)p^{-\ell}}\in K[[t]],$$
and the derivatives of $r_{[\ell]}$ are given by:
$$\theta^{(p^j)}(r_{[\ell]})=
\binom{(\alpha_{j+1+\ell}-\alpha_{\ell})p^{-\ell}}{p^j}
t^{-p^j}\left( r_{[\ell]} - \sum_{k=1}^j
t^{(\alpha_{k+\ell}-\alpha_\ell)p^{-\ell}} \right).$$
Hence, one obtains for all $n\in\NN$: $$\theta^{(n)}(r_{[\ell]})\in
\binom{(\alpha-\alpha_{\ell})p^{-\ell}}{n} t^{-n}r_{[\ell]}
+ K(t).$$

For calculating the constants in $F_{[\ell]}\otimes_F F_{[\ell]}$, we
remark that $\{ r_{[\ell]}^i\otimes r_{[\ell]}^j \mid 0\leq i,j\leq
p^\ell-1\}$ is a basis of $F_{[\ell]}\otimes_F F_{[\ell]}$ as an
$F$-vector space. A further calculation shows that for $n\in\NN$ and
$k\in\ZZ$ 
$$\theta^{(n)}\left( t^k r_{[\ell]}^i\otimes r_{[\ell]}^j\right)
\congr
\binom{k+ (i+j)(\alpha-\alpha_{\ell})p^{-\ell}}{n}t^{-n}\left(
t^k r_{[\ell]}^i\otimes r_{[\ell]}^j\right)$$
modulo terms in $r_{[\ell]}^{\mu}\otimes r_{[\ell]}^{\nu}$ with $\mu+\nu<i+j$.
So an element $x:=\sum_{i,j} c_{i,j}r_{[\ell]}^i\otimes r_{[\ell]}^j\in
F_{[\ell]}\otimes_F F_{[\ell]}$ can only be constant, if for the terms
of maximal degree these binomial coefficients vanish for all $n$.
Since $\alpha$ is not rational, this is only possible if $i=j=0$ is
the maximal degree and if $k=0$, \ie{} $x\in K$.
So we have shown that $C_{F_{[\ell]}\otimes_F F_{[\ell]}}=K$ for all
$\ell\in\NN$, which implies by Theorem \ref{general-realisation} that
there are no purely inseparable PPV-extensions over $F=K(t,r)$.
\end{exam}

\begin{exam}
The following example is quite contrary to the previous one. In this example all
purely inseparable ID-extensions are PPV-extensions.

Let $\alpha_1,\dots, \alpha_n\in\ZZ_p$ be $p$-adic integers such that
the set $\{1,\alpha_1,\dots, \alpha_n\}$ is $\ZZ$-linear independent,
and let $\alpha_i=:\sum_{k=0}^\infty a_{i,k}p^k$ ($i=1,\dots, n$) be
their normal series, \ie{} $a_{i,k}\in\{ 0,\dots, p-1\}$. For
$i=1,\dots, n$, we then
define
$$s_i:=\sum_{k=0}^\infty a_{i,k}t^{p^k}\in K((t))$$
and consider the field $F:=K(t,s_1,\dots, s_n)$ which obviously is an
ID-subfield of~$K((t))$.
Since $\theta^{(p^\ell)}(s_i)=a_{i,\ell}$ for all $\ell\in\NN$ and
$i=1,\dots, n$, the extension $F/K(t)$ is a PPV-extension and its
Galois group scheme is a subgroup scheme of~$\GG_a^{\,n}$. Actually, the
condition on the $\alpha_i$ implies that the $s_i$ are algebraically
independent over $K(t)$ and hence the Galois group scheme is the full
group~$\GG_a^{\,n}$. Therefore by Corollary \ref{E_ell-is-ppv}, for all
$\ell\in\NN$ the extension $F_{[\ell]}/F$ is a PPV-extension and
$\uGal(F_{[\ell]}/F)\isom (\boldsymbol{\alpha}_{p^\ell})^n$, where
$\boldsymbol{\alpha}_{p^\ell}$ denotes the kernel of the $p^\ell$-th
power Frobenius map on~$\GG_a^{\,n}$.
Furthermore, $(\boldsymbol{\alpha}_{p^\ell})^n$ is a commutative group
scheme and so all its subgroup schemes are normal subgroup
schemes. By Theorem \ref{galois_correspondence}, this implies that
every intermediate ID-field $F\leq E\leq  F_{[\ell]}$ is a
PPV-extension of $F$. So all purely inseparable ID-extensions of $F$
are PPV-extensions over~$F$.
Furthermore, by Cor. \ref{special-realisation}, an infinitesimal
group scheme is realisable over $F$ if and only if it is a closed subgroup
scheme of $(\boldsymbol{\alpha}_{p^\ell})^n$ for some $\ell$, \ie{} if
and only if it is a closed infinitesimal subgroup scheme of~$\GG_a^{\,n}$.
\end{exam}

\comment{
\section{Some Conjectures}

\begin{conj}
Prop. \ref{finite-id-ext} with $C_F$ perfect.
\end{conj}

\begin{conj}
Let $(F,\theta)$ be an ID-field with a perfect field of constants
$C:=C_F$. Assume further that $trdeg(F/C)<\infty$.
Then there exist inseparable PPV-extensions of $F$ if and
only if $F_{[1]}\ne F$.
\end{conj}

\begin{proof}
The only if part has already been proved in \cite{maurischat}. So we
have to show the if-part. Let $E:=F_{[1]}$. By
assumption, $E$ is a purely inseparable ID-extension of $F$. 
By Prop. \ref{finite-id-ext}, there is a PPV-extension $\tilde{E}$
over $F':=FC^{\rm alg}$ containing $EC^{\rm alg}$.
\dots maybe:
By Galois decent (see [Oes09]), we obtain that
$E':=\tilde{E}^{\Gal(C^{\rm alg}/C)}$ is a PPV-extension of $F$. So
$E'$ is in fact an inseparable PPV-extension of $F$. 
\end{proof}

} 


\vspace*{.5cm}

\parindent0cm

\end{document}